\documentclass[a4paper]{article}

\input{preamble}

\title{\bfseries Elastically safeguarded augmented Lagrangian methods}
\author{Ernesto G.\ Birgin\thanks{%
        Institute of Mathematics and Statistics, University of S\~ao Paulo, Brazil. \email{egbirgin@ime.usp.br}, \orcid{0000-0002-7466-7663}.}%
    \and Alberto De Marchi\thanks{%
        Institute of Applied Mathematics and Scientific Computing, Department of Aerospace Engineering, University of the Bundeswehr Munich, Germany. \email{alberto.demarchi@unibw.de}, \orcid{0000-0002-3545-6898}.}%
    \and Patrick Mehlitz\thanks{%
        Department of Mathematics and Computer Science, Marburg University, Germany. \email{mehlitz@uni-marburg.de}, \orcid{0000-0002-9355-850X}.}%
}
\newcommand{\TheFunding}{This work has been partially supported by the Brazilian agencies FAPESP (grants 2013/07375-0, 2022/05803-3, 2023/08706-1, and 2024/22384-0) and CNPq (grant 302073/2022-1).}
\date{}

\begin{document}

\maketitle

\begin{abstract}
	We investigate, theoretically and numerically, a class of elastically safeguarded augmented Lagrangian methods for nonlinear optimization problems with inequality and equality constraints.
	Safeguarded augmented Lagrangian methods are known to exhibit substantially stronger global convergence guarantees than their non-safeguarded counterparts, making them attractive in practice.
	A persistent limitation, however, is that existing methods rely on a fixed safeguard, whose a priori selection can be difficult and inherently limits adaptivity, e.g., with respect to problem scaling.
	We propose an adaptive safeguarding mechanism that allows the safeguard to grow dynamically, overcoming these drawbacks while preserving the desirable global convergence properties of variants with a fixed safeguard.
	We further establish complexity bounds in terms of worst-case iteration counts.
	Numerical experiments comparing ALGENCAN, a state-of-the-art solver with rigid safeguard, against an elastically safeguarded variant thereof confirm that elastic safeguarding yields consistent practical benefits.

    \medskip

    \noindent\textbf{Keywords.}
    Augmented Lagrangian method,
    complexity analysis,
    convergence analysis,
    nonlinear programming,
    safeguarding.
    
    \noindent\textbf{AMS MSC.}
    \amsmscLink{49M20}, % Numerical methods of relaxation type
    \amsmscLink{65K05}, % Numerical mathematical programming methods
    \amsmscLink{65Y20}, % Complexity and performance of numerical algorithms
    \amsmscLink{90C30}. % Nonlinear programming
\end{abstract}

\blfootnote{\TheFunding}

\section{Introduction}

We are concerned with the standard nonlinear optimization problem
\begin{equation}\label{eq:NLP}\tag{NLP}
    \minimize_x\quad
    f(x)
    \quad\stt\quad
    g(x)\leq 0,\quad
    h(x)=0
\end{equation}
where $f\colon\R^n\to\R$, $g\colon\R^n\to\R^m$, and $h\colon\R^n\to\R^\ell$
are continuously differentiable functions.
Lower-level constraints of the form $x\in\Omega$ for some closed set $\Omega\subseteq \R^n$ can also be considered in \eqref{eq:NLP},
as in \cite{birgin2014practical,andreani2008augmented}.
Including such constraints would not affect the theoretical analysis, but it would make the notation more technical --- we prefer to drop them for simplicity and clarity of presentation.

\medskip

Augmented Lagrangian methods (ALMs) are among the most successful approaches for solving nonlinear programming problems of type \eqref{eq:NLP}.
Introduced independently by Hestenes \cite{hestenes1969multiplier} and Powell \cite{powell1969method}, and extended to convex programs by Rockafellar \cite{rockafellar1973multiplier,rockafellar1976augmented}, these methods
augment the Lagrangian with a quadratic penalty term and alternate between primal
minimization and multiplier updates, achieving exact stationarity for finite values of the
penalty parameter (under mild conditions).

The theoretical properties of ALMs depend critically on the multiplier update rule.
The classical Powell--Hestenes--Rockafellar (PHR) scheme uses an unconstrained update and typically holds the penalty parameter fixed, but its convergence theory requires either convexity of the model problem or boundedness of the multiplier sequence.
Without an additional safeguarding mechanism, global convergence typically fails \cite{kanzow2017example}.
The safeguarding scheme of Andreani, Birgin, Martínez, and Schuverdt \cite{andreani2008augmented,birgin2014practical} clips the multiplier estimates to a prescribed compact set at every iteration.
The mechanism of Conn, Gould, and Toint \cite{conn1991globally} updates the multiplier estimates only when some other conditions are satisfied. 
Both algorithms, implemented respectively in the software packages ALGENCAN and LANCELOT, are designed in such a way that the multiplier estimates do not ``behave too badly'' \cite[\S 4]{conn1991globally}, yielding global convergence for nonconvex problems.
However, these safeguards can slow down dual convergence and lead to strictly weaker guarantees than the classical PHR update.

\paragraph*{Contribution}
The present work resolves this tension.
We propose an ALM whose multiplier update is based on the elastic safeguarding scheme of \cite[\S 6]{demarchi2025augmented}, 
in which the safeguarding bound is adaptively relaxed so that it becomes inactive near regular limit points,
restoring PHR-type dual convergence precisely where it is most beneficial while retaining the robustness of safeguarding elsewhere.
We extend this mechanism from the convex composite setting of \cite{demarchi2025augmented} to nonconvex nonlinear programming, and contribute two further analytical results.
First, we provide a global convergence analysis showing that accumulation points are approximately stationary in the classical sense of Karush--Kuhn--Tucker (KKT) \cite[Def.~3.1]{birgin2014practical} and, under weak constraint qualifications, even stationary for \eqref{eq:NLP}.
Second, we establish worst-case complexity bounds on the number of (outer and total inner) iterations needed to produce either an $\epsilon$-approximate solution or a certificate of local infeasibility, 
in the spirit of Birgin and Martínez \cite{birgin2020complexity} for ALGENCAN.

\paragraph*{Outline}
The paper is organized as follows.
After surveying the relevant literature and introducing notation, \cref{sec:alm} describes the proposed algorithm.
\cref{sec:conv_ana,sec:complexity} present the global convergence and complexity results, respectively.
\cref{sec:num_experiments} reports some results of numerical experiments on the CUTEst collection \cite{Gould2014}.
Some final remarks conclude the paper in \cref{sec:conclusions}.

\subsection{Literature review}

\paragraph*{Origins and the connection to the proximal point algorithm}
The augmented Lagrangian method was introduced by Hestenes \cite{hestenes1969multiplier} and Powell \cite{powell1969method} for equality constraints and extended to inequality constraints and convex programs by Rockafellar \cite{rockafellar1973multiplier}, who also established the landmark equivalence between ALMs and the proximal point algorithm applied in dual space \cite{rockafellar1976augmented}.
This connection is central to the convergence theory of convex ALMs and carries over to inexact variants and beyond \cite{rockafellar1976monotone,rockafellar2022convergence,demarchi2025augmented}, see the monographs of Bertsekas \cite{bertsekas1996constrained} and Rockafellar and Wets \cite{rockafellar1998variational} for comprehensive treatments.

\paragraph*{Safeguarded ALMs for nonconvex problems}
For nonconvex problems, unbounded multiplier growth in the classical PHR scheme can cause dual divergence even when primal iterates converge to a feasible point.
Conn, Gould, and Toint \cite{conn1991globally} gave an early and influential resolution: multiplier estimates are updated only when the inner subproblem achieves sufficient reduction of infeasibility, and the penalty parameter is decreased otherwise.
This conditional update rule keeps the penalty parameter bounded away from zero under a strong regularity condition and was implemented in the LANCELOT package.
Andreani, Birgin, Martínez, and Schuverdt \cite{andreani2008augmented,birgin2014practical} later introduced a safeguarding mechanism based on imposing uniform bounds on the multiplier estimates, which underpins the widely used ALGENCAN solver.
Birgin, Castillo, and Martínez \cite{BirginCastilloMartinez2005} defined a general class of 65 safeguarded ALM variants and proved primal-dual global convergence under a strong regularity condition, establishing safeguarded ALMs as the standard for nonconvex programming.

\paragraph*{Global convergence under weak regularity}
Birgin, Haeser, Maculan, and Ramirez \cite{BirginHaeserMaculanRamirez2025} generalized \cite{BirginCastilloMartinez2005} to include non-safeguarded updates within a single convergence theory, covering the classical PHR method as a special case.
Their key contribution is a weak regularity condition that does not require the Lagrange multiplier set to be bounded.
Under it, every qualifying limit point is a stationary point, and non-safeguarded methods may keep the penalty parameter fixed throughout, even without convexity.
The present work builds on \cite{BirginHaeserMaculanRamirez2025}, showing that elastic safeguarding preserves these conclusions while sharpening them in the regular case.

\paragraph*{Complexity analysis}
Birgin and Martínez \cite{birgin2020complexity} established the first rigorous worst-case complexity bounds for ALGENCAN, 
giving $\bigO(\log^2(1/\epsilon))$ outer iterations to reach either an approximately stationary point or a certificate of local infeasibility.
Complexity bounds for the number of inner iterations, function evaluations, and derivative evaluations were also established under some Lipschitz continuity assumptions.
Our complexity analysis extends this framework to the elastic safeguarding setting. 
The adaptive safeguarding bound introduces additional case distinctions depending on whether the bound is active, but the logarithmic outer iteration count is preserved, with constants depending explicitly on the elasticity parameters.

\paragraph*{Elastic safeguarding}
The elastic scheme we employ originates in \cite[\S 6]{demarchi2025augmented}, which studies ALMs for fully convex composite problems.
That work extends the ALM--proximal point correspondence to the inexact safeguarded setting and distinguishes the regular case (a stationary minimizer exists) from the irregular case (all minimizers are nonstationary).
Standard safeguarding leads to weaker convergence guarantees than the ones for the classical ALM in the regular convex case,
where bounds for the multiplier estimates are superfluous, if not an impediment.
The elastic modification of \cite{demarchi2025augmented} remedies this by allowing the safeguarding bound to expand as the iterates approach a regular solution, recovering PHR behavior in the limit while preserving primal convergence guarantees in the irregular case.
We adapt this mechanism to the nonconvex programming setting, where the proximal point correspondence is unavailable, and establish global convergence results under a hierarchy of assumptions.

\paragraph*{Composite optimization}
The methodology and analysis investigated in this paper can be extended to problems of type
\[
\minimize_x\quad
f(x) + \varphi(F(x))
\quad\stt\quad
x\in\R^n
\]
where $f\colon\R^n\to\R$ and $F\colon\R^n\to\R^q$ are continuously differentiable
and $\varphi\colon\R^q \to \R\cup\{+\infty\}$ is lower semicontinuous and prox-friendly,
see \cite{demarchi2023constrained,demarchi2024local,demarchi2025augmented}
where safeguarded ALMs for such problems have been studied.
Obviously, problem \eqref{eq:NLP} is covered by this abstract setting
as $\varphi$ can be chosen to be the indicator function of a closed set.

\subsection{Notation}

The ceiling function takes as input a real number $\alpha\in\R$ and returns the least integer greater than or equal to $\alpha$, 
written $\ceil{\alpha}$.
The signum function takes $\alpha$ as input and returns $-1$, $0$, or $1$ if $\alpha<0$, $\alpha=0$, or $\alpha>0$, respectively,
written $\sgn(\alpha)$.
By $\N$, $\R$, $\R^n$, and $\R^n_+$,
we denote the natural numbers including $0$, the real numbers, the set of all vectors with $n$ real components,
and the nonnegative orthant in $\R^n$.
Throughout, $\R^n$ is equipped with the Euclidean norm $\norm{\cdot}$ and the Euclidean inner product $\innprod{\cdot}{\cdot}$.
The infinity norm is represented by $\norm{\cdot}_\infty$.
Given a set $A\subseteq\R^n$ and a scalar $\alpha\in\R$, their product is $\alpha A \coloneqq \{\alpha x \,|\, x\in A\}$.
For a nonempty, bounded set $B\subseteq\R^n$, 
we use $\extent(B)\coloneqq\sup_{x\in B}\norm{x}$ to denote the extent of $B$,
i.e., the radius of the smallest closed ball around the origin that contains $B$.
For $u,v\in\R^n$, vectors $\max\{u,v\},\min\{u,v\}\in\R^n$ are defined by applying the $\max$ and $\min$
operation componentwise, respectively, and $[u,v] \coloneqq \{x\in\R^n\,|\,u\leq x\leq v\}$
is the box induced by the lower bound $u$ and the upper bound $v$.
For a sequence $\{x^k\}\subseteq\R^n$ of vectors and $x\in\R^n$, $x^k\to x$ indicates convergence of $\{x^k\}$ to $x$.
Given an infinite set $K\subseteq\N$, $x^k\to_K x$ is used to express that the subsequence $\{x^k\}_{k\in K}$
converges to $x$.
For a real sequence $\{t_k\}$ and $t\in\R$, $t_k\to t$ indicates convergence of $\{t_k\}$ to $t$.
We use $t_k\searrow 0$ to express that $\{t_k\}$ converges to $t$ from above,
and $t_k\downarrow t$ additionally indicates that $t_k\neq t$ holds for all $k\in\N$.

Given a continuously differentiable vector function $\Psi\colon\R^n\to\R^m$ and $x\in\R^n$, 
$\Psi'(x)\in\R^{m\times n}$ is the Jacobian of $\Psi$ at $x$.
For a continuously differentiable scalar function $\psi\colon\R^n\to\R$,
$\nabla\psi(x) \coloneqq \psi'(x)^\top$ represents the gradient of $\psi$ at $x$.
Partial Jacobians and gradients with respect to certain blocks of variables
are denoted in the canonical way.

\section{Augmented Lagrangian framework}\label{sec:alm}

We review some foundations of augmented Lagrangian methods in \cref{sec:AL_foundations}
before presenting the elastically safeguarded variant in \cref{sec:elastic_safeguarding}.

\subsection{Foundations of augmented Lagrangian techniques}\label{sec:AL_foundations}

Throughout, let $\Lag\colon\R^n\times\R^m\times\R^\ell\to\R$ be the Lagrangian function of \eqref{eq:NLP}, defined by
\[
    \Lag(x,\mu,\lambda)
    \coloneqq
    f(x) + \innprod{\mu}{g(x)} + \innprod{\lambda}{h(x)},
\]
and recall that a feasible point $x\in\R^n$ of \eqref{eq:NLP} is referred to as stationary
if there exist Lagrange multipliers $\mu\in\R^m$
and $\lambda\in\R^\ell$ such that
\[
	\nabla_x\Lag(x,\mu,\lambda)=0,\quad
	\mu\geq 0,\quad
	\mu^\top g(x) = 0.
\] 
When additionally equipped with the feasibility conditions $g(x)\leq 0$ and $h(x)=0$,
the above system is referred to as the KKT conditions of \eqref{eq:NLP}.
It is well known that local minimizers of \eqref{eq:NLP} are stationary
in the presence of suitable qualification conditions like the
Mangasarian--Fromovitz constraint qualification (MFCQ), which claims
that
\[
	g'(x)^\top\mu + h'(x)^\top\lambda=0,\quad
	\mu\geq 0,\quad
	\mu^\top g(x) = 0
\]
merely possesses the trivial solution $(\mu,\lambda)=(0,0)$.

For some penalty parameter $\upsilon>0$,
we also make use of the so-called augmented Lagrangian (AL) function
$\AugLag_\upsilon\colon\R^n\times\R^m\times\R^\ell\to\R$ 
of \eqref{eq:NLP} which is defined by
\[
    \AugLag_\upsilon(x,\mu,\lambda)
    \coloneqq
    f(x)
    +
    \frac{1}{2\upsilon} \norm{ \max\{0, g(x)+\upsilon \mu\} }^2
    +
    \frac{1}{2\upsilon} \norm{ h(x)+\upsilon \lambda }^2.
\]
Given an iteration index $k\in\N$, the associated AL subproblem is
\begin{equation}\label{eq:subproblem}\tag{SP$_k$}
    \minimize_x \quad
    \AugLag_{\upsilon_k}(x,\hat{\mu}^k,\hat{\lambda}^k)
    \quad\stt\quad x\in\R^n
\end{equation}
for some given multiplier estimates $\hat{\mu}^k \in \R^m$, $\hat{\lambda}^k\in\R^\ell$ and penalty parameter $\upsilon_k>0$.
We assume throughout the existence of solutions for \eqref{eq:subproblem}, which can result from certain boundedness assumptions.
In practice, this can be guaranteed, e.g., by including in \eqref{eq:subproblem} simple constraints of the form $x\in[x_l,x_u]$ with $x_l,x_u\in\R^n$ such that $x_l\leq x_u$, see \cite[(1)--(2)]{birgin2020complexity}.
Given some tolerance $\varepsilon_k\geq 0$,
what can be expected is that an iterate $x^k$ obeys
the approximate global optimality
\begin{equation}\label{eq:approximate_minimality}
	\forall x\in\R^n\colon\quad
	\AugLag_{\upsilon_k}(x^k,\hat{\mu}^k,\hat{\lambda}^k)
    \leq
    \AugLag_{\upsilon_k}(x,\hat{\mu}^k,\hat{\lambda}^k) + \varepsilon_k
\end{equation}
for \eqref{eq:subproblem} whenever the data functions $f$ and $g_1,\ldots,g_m$ are convex and $h$ is affine.
In the absence of these additional properties of the data,
it is still reasonable to require the approximate stationarity condition
\begin{equation}\label{eq:subproblem_stationarity}
    \norm{ \nabla_x \AugLag_{\upsilon_k}(x^k,\hat{\mu}^k,\hat{\lambda}^k) } \leq \varepsilon_k
\end{equation}
for the next iterate.
Both criteria may be referred to as demanding that $x^k$ is an $\varepsilon_k$-approximate solution of \eqref{eq:subproblem}
in the following.

After tackling the AL subproblem \eqref{eq:subproblem},
one can compute new estimates for the Lagrange multipliers $\mu^k$ and $\lambda^k$ and then update the penalty parameter $\upsilon_k$.
A common rule for the multiplier update is to set
\begin{equation}\label{eq:standard_multiplier_update}
	\mu^k = \hat{\mu}^k + [g(x^k)-z^k]/\upsilon_k
	\quad\text{and}\quad
	\lambda^k = \hat{\lambda}^k + h(x^k)/\upsilon_k
\end{equation}
where
\begin{equation}\label{eq:formula_for_z}
	z^k = \min\{ 0, g(x^k) + \upsilon_k \hat{\mu}^k \}.
\end{equation}
The specific rule \eqref{eq:standard_multiplier_update} was adopted in \cite{conn1991globally,birgin2014practical} 
and guarantees that
\[
\nabla_x \Lag(x^k,\mu^k,\lambda^k)
=
\nabla_x \AugLag_{\upsilon_k}(x^k,\hat{\mu}^k,\hat{\lambda}^k)
\]
holds, by design, for each $k\in\N$, so that stationarity for \eqref{eq:NLP} can be monitored by means of stationarity for \eqref{eq:subproblem}, see \eqref{eq:subproblem_stationarity}.

\begin{remark}\label{rem:update_of_multiplier}
	Combining \eqref{eq:standard_multiplier_update} and \eqref{eq:formula_for_z} leads to the classical formula
	\begin{align*}
		\mu^k
		&=
		\hat\mu^k + [g(x^k)-z^k]/\upsilon_k
		=
		\hat\mu^k + \max\{g(x^k)/\upsilon_k,-\hat\mu^k\}
		=
		\max\{0,\hat\mu^k + g(x^k)/\upsilon_k\}
	\end{align*}
	for the update rule of the Lagrange multipliers associated with inequality constraints.
\end{remark}

\subsection{Elastic safeguarding}\label{sec:elastic_safeguarding}

The safeguarded ALM in \cite[\S 4.1]{birgin2014practical} demands all multiplier estimates 
to lie in prescribed nonempty, compact safeguarding sets $\safeYY^g\subseteq\R^m$ and $\safeYY^h\subseteq\R^\ell$.
For this purpose, one typically exploits
\begin{equation}\label{eq:natural_choice_safeYY}
	\safeYY^g = [0,\mu_{\max}],
	\quad
	\safeYY^h = [\lambda_{\min},\lambda_{\max}],
\end{equation}
where $\mu_{\max}\in\R^m$ and $\lambda_{\min},\lambda_{\max}\in\R^\ell$ satisfy $\mu_{\max}>0$, $\lambda_{\min}<0$, and $\lambda_{\max}>0$
in componentwise fashion,
but other choices might be reasonable as well.
Better behavior can be expected when the safeguarding sets $\safeYY^g$ and $\safeYY^h$ are ``large enough'' to cover a Lagrange multiplier (in case of existence).
The pseudocode of a prototypical safeguarded ALM can be found in \cref{alg:ALM_rigid_safeguard}.

\begin{algorithm2e}[htb]
	\DontPrintSemicolon
	\KwData{$\upsilon_0>0$; nonempty, compact sets $\safeYY^g\subseteq\R^m$ and $\safeYY^h\subseteq\R^\ell$; 
	$\tau,\gamma\in (0,1)$\;}
	Set $V_{-1}\gets+\infty$.\;
	\For{$k = 0,1,2\ldots$}{
		Select $\hat{\mu}^k\in \safeYY^g$ and $\hat{\lambda}^k\in \safeYY^h$.\label{step:ALM_rigid_safeguard:ysafe}\;
		Find an $\varepsilon_k$-approximate solution $x^k\in \R^n$ of \eqref{eq:subproblem}.\label{step:ALM_rigid_safeguard:subproblem}\;
		Set 
		\label{step:ALM_rigid_safeguard:z}%
        $z^k \gets \min\{0,g(x^k)+\upsilon_k\hat{\mu}^k\}$
        and
		$V_k \gets \max\{\norm{g(x^k)-z^k}, \norm{h(x^k)}\}$.\;
		Set 
		\label{step:ALM_rigid_safeguard:y}%
		$\mu^k \gets \hat{\mu}^k + [g(x^k) - z^k]/\upsilon_k$
        and
		$\lambda^k \gets \hat{\lambda}^k + h(x^k)/\upsilon_k$.\;
		\If{$V_k \leq \tau V_{k-1}$%
			\label{step:ALM_rigid_safeguard:check_V}}{%
			Set $\upsilon_{k+1} \gets \upsilon_k$.%
			\label{step:ALM_rigid_safeguard:keep_penalty_param}}
		\Else{%
			Set $\upsilon_{k+1} \gets \gamma \upsilon_k$.%
			\label{step:ALM_rigid_safeguard:change_penalty_param}}
	}
	\caption{Safeguarded ALM for \eqref{eq:NLP} with rigid safeguard.}
	\label{alg:ALM_rigid_safeguard}
\end{algorithm2e}

In order to avoid a deliberate choice of $\safeYY^g$ and $\safeYY^h$, since the Lagrange multiplier set is usually unknown a priori, and to make the AL scheme more adaptive, we consider
\cref{alg:ALM_elastic_safeguard}, where the safeguarding sets are allowed to grow.
This scheme is called ``elastic'' as opposed to the ``rigid'' strategy of \cref{alg:ALM_rigid_safeguard} with prefixed safeguarding bounds.
We note that the nonempty, compact safeguarding sets $\safeYY^g$ and $\safeYY^h$ in \cref{alg:ALM_elastic_safeguard} 
are rather arbitrary, but a natural choice would again be given by \eqref{eq:natural_choice_safeYY}.
In \cref{alg:ALM_elastic_safeguard}, the changes relative to \cref{alg:ALM_rigid_safeguard} are highlighted in \textcolor{cb2orange}{orange}.

\begin{algorithm2e}[htb]
	\DontPrintSemicolon
	\KwData{$\upsilon_0>0$; nonempty, compact sets $\safeYY^g\subseteq\R^m$ and $\safeYY^h\subseteq\R^\ell$; 
	$\tau,\gamma\in (0,1)$; {\color{cb2orange}$\eta\in(\gamma,\sqrt{\gamma})$}\;}
	Set $V_{-1}\gets+\infty$ and {\color{cb2orange}$\varrho_0 \gets 1$}.\;
	\For{$k = 0,1,2\ldots$}{
		Select $\hat{\mu}^k\in {\color{cb2orange}\varrho_k} \safeYY^g$ and $\hat{\lambda}^k\in{\color{cb2orange}\varrho_k} \safeYY^h$.\label{step:ALM_elastic_safeguard:ysafe}\;
		Find an $\varepsilon_k$-approximate solution $x^k\in \R^n$ of \eqref{eq:subproblem}.\label{step:ALM_elastic_safeguard:subproblem}\;
		Set 
		\label{step:ALM_elastic_safeguard:z}%
        $z^k \gets \min\{0,g(x^k)+\upsilon_k\hat{\mu}^k\}$
        and
		$V_k \gets \max\{\norm{g(x^k)-z^k}, \norm{h(x^k)}\}$.\;
		Set 
		\label{step:ALM_elastic_safeguard:y}%
		$\mu^k \gets \hat{\mu}^k + [g(x^k) - z^k]/\upsilon_k$
        and
		$\lambda^k \gets \hat{\lambda}^k + h(x^k)/\upsilon_k$.\;
		\If{$V_k \leq \tau V_{k-1}$%
			\label{step:ALM_elastic_safeguard:check_V}}{%
			Set $\upsilon_{k+1} \gets \upsilon_k$ and {\color{cb2orange}$\varrho_{k+1} \gets \varrho_k$}.%
			\label{step:ALM_elastic_safeguard:keep_penalty_param}}
		\Else{%
			Set $\upsilon_{k+1} \gets \gamma \upsilon_k$ and {\color{cb2orange}$\varrho_{k+1} \gets \eta \varrho_k / \gamma$}.%
			\label{step:ALM_elastic_safeguard:change_penalty_param}}
	}
	\caption{Safeguarded ALM for \eqref{eq:NLP} with {\color{cb2orange}elastic} safeguard.}
	\label{alg:ALM_elastic_safeguard}
\end{algorithm2e}

Let us motivate the precise implementation of elastic safeguarding in \cref{alg:ALM_elastic_safeguard}.
In fact, what really matters in the global convergence analysis \cite{birgin2014practical,demarchi2024local},
which addresses the rigidly safeguarded ALM stated in \cref{alg:ALM_rigid_safeguard},
is that the condition 
\begin{equation}\label{eq:elastic_safeguard_requirement_standard}
	\upsilon_k \norm{(\hat{\mu}^k, \hat{\lambda}^k)} \to 0
	\quad\text{as}\quad
	\upsilon_k\downarrow 0 
\end{equation}
remains guaranteed.
Indeed, we can work with elastic safeguarding sets $\varrho_k \safeYY^g$ and $\varrho_k \safeYY^h$,
which grow with scaling factor $\varrho_k > 0$, 
as long as $\upsilon_k\varrho_k\downarrow 0$ whenever $\upsilon_k\downarrow 0$.
This observation has been incorporated into \cref{alg:ALM_elastic_safeguard}.
The particular update at \cref{step:ALM_elastic_safeguard:change_penalty_param} 
allows to enlarge the safeguarding sets, 
rendering a debatable choice of $\safeYY^g$ and $\safeYY^h$ unnecessary, 
while perpetuating control over the terms $\upsilon_k \hat{\mu}^k$ and $\upsilon_k \hat{\lambda}^k$.
In fact, by selecting $\eta \in (\gamma,1)$, the scaling factor $\varrho_{k+1} > \varrho_k$ increases 
but the product $\upsilon_{k+1} \varrho_{k+1} = \eta \upsilon_k \varrho_k < \upsilon_k \varrho_k$ decreases. 
For $\eta = \gamma$, which is excluded in \cref{alg:ALM_elastic_safeguard}, 
the safeguarding mechanism in \cref{alg:ALM_elastic_safeguard} would coincide with the rigid one from \cref{alg:ALM_rigid_safeguard},
since $\varrho_{k+1}=\varrho_k$ for all $k\in\N$.
Furthermore, with $\eta < \gamma$, which is also prohibited in \cref{alg:ALM_elastic_safeguard}, 
the scaling factor $\varrho_{k+1}$ would decrease at \cref{step:ALM_elastic_safeguard:change_penalty_param}, 
effectively shrinking the safeguarding sets instead of enlarging them.

Let us also note that choosing $\eta<\sqrt{\gamma}$, \cref{step:ALM_elastic_safeguard:change_penalty_param} guarantees
$\upsilon_{k+1} \varrho_{k+1}^2 = \nicefrac{\eta^2}{\gamma}\, \upsilon_k \varrho_k^2 < \upsilon_k \varrho_k^2$, so that we even have
\begin{equation}\label{eq:elastic_safeguard_requirement_enhanced}
	\upsilon_k \norm{(\hat{\mu}^k, \hat{\lambda}^k)}^2 \to 0
	\quad\text{as}\quad
	\upsilon_k\downarrow 0 .
\end{equation}
The latter will be exploited in some but not all of the results within our global convergence analysis.

\section{Convergence analysis}\label{sec:conv_ana}

In this section, 
we analyze the global convergence properties of \cref{alg:ALM_elastic_safeguard}.
As we will see, we are in position to recover the results from \cite{birgin2014practical}
for the rigidly safeguarded ALM.
We study two situations separately.
In \cref{sec:conv_ana_approx_glob}, 
we suppose that the iterates computed in \cref{step:ALM_elastic_safeguard:subproblem}
satisfy the approximate global optimality condition stated in \eqref{eq:approximate_minimality}.
\cref{sec:conv_ana_approx_stat} is concerned with the setting where all iterates
obey the approximate stationarity condition \eqref{eq:subproblem_stationarity}.
\cref{sec:CQs} puts some special emphasis on the situation where qualification conditions
are employed to guarantee feasibility and stationarity of primal accumulation points.
The proofs presented here follow similar ones for the rigidly safeguarded ALM
with only minor deviations.
Nevertheless, they are presented in order to illustrate where the safeguarding properties
\eqref{eq:elastic_safeguard_requirement_standard} and \eqref{eq:elastic_safeguard_requirement_enhanced}
are required.

To start,
let us state a general result which presents sufficient criteria for the feasibility of primal accumulation points
associated with \cref{alg:ALM_elastic_safeguard}.
It neither requires \eqref{eq:approximate_minimality} nor \eqref{eq:subproblem_stationarity} to hold for all $k\in\N$,
and is an adaptation of \cite[Ex.~4.12]{birgin2014practical}.

\begin{lemma}\label{lem:_feas}
	Let $\{x^k\}$ be a sequence generated by \cref{alg:ALM_elastic_safeguard}.
	Furthermore, let one of the following conditions be valid:
	\begin{enumerate}[label=(\roman*)]
		\item\label{item:gamma_uniformly_pos}
			$\{\upsilon_k\}$ remains bounded away from zero;
		\item\label{item:AugLag_bounded}
			$\{\AugLag_{\upsilon_k}(x^k,\hat\mu^k,\hat\lambda^k)\}$ is upper bounded.
	\end{enumerate}
	Then each accumulation point of $\{x^k\}$ is feasible for \eqref{eq:NLP}.	
\end{lemma}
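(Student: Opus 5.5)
The proof splits according to the two alternatives. Throughout, fix an accumulation point $\bar x$ of $\{x^k\}$ and an infinite set $K\subseteq\N$ with $x^k\to_K\bar x$.

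\emph{Case \ref{item:gamma_uniformly_pos}.} Since $\upsilon_{k+1}\in\{\upsilon_k,\gamma\upsilon_k\}$ with $\gamma\in(0,1)$, boundedness of $\{\upsilon_k\}$ away from zero means the penalty parameter is reduced only finitely often. Hence there is $k_0\in\N$ such that $V_k\le\tau V_{k-1}$ for all $k\ge k_0$. Because $V_{k_0-1}<\infty$ once $k_0\ge1$, this yields $V_k\le\tau^{k-k_0}V_{k_0}\to0$, so $h(x^k)\to0$ and $g(x^k)-z^k\to0$.

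By \eqref{eq:formula_for_z}, $z^k\le0$, so $g(x^k)=(g(x^k)-z^k)+z^k\le g(x^k)-z^k$ componentwise. Therefore $\max\{0,g(x^k)\}\le|g(x^k)-z^k|\to0$. Continuity of $g$ and $h$ along $K$ then gives $g(\bar x)\le0$ and $h(\bar x)=0$. Neither the safeguard nor the subproblem criterion enters this case.

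\emph{Case \ref{item:AugLag_bounded}.} If $\{\upsilon_k\}$ is bounded away from zero, case \ref{item:gamma_uniformly_pos} applies. Otherwise monotonicity gives $\upsilon_k\downarrow0$, and by \eqref{eq:elastic_safeguard_requirement_standard} we have $\upsilon_k\hat\mu^k\to0$ and $\upsilon_k\hat\lambda^k\to0$. This follows from $\hat\mu^k\in\varrho_k\safeYY^g$ and $\hat\lambda^k\in\varrho_k\safeYY^h$, the compactness of the sets, and $\upsilon_k\varrho_k\to0$, the latter because $\upsilon_{k+1}\varrho_{k+1}=\eta\upsilon_k\varrho_k$ at each reduction while the product stays constant otherwise.

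Let $C$ be an upper bound for $\AugLag_{\upsilon_k}(x^k,\hat\mu^k,\hat\lambda^k)$. Multiplying by $2\upsilon_k$ gives
\[
\norm{\max\{0,g(x^k)+\upsilon_k\hat\mu^k\}}^2+\norm{h(x^k)+\upsilon_k\hat\lambda^k}^2\le 2\upsilon_k\bigl(C-f(x^k)\bigr).
\]
Along $K$, $f(x^k)\to f(\bar x)$ is bounded, so the right-hand side tends to $0$. Since $\upsilon_k\hat\mu^k\to0$ and $\upsilon_k\hat\lambda^k\to0$, passing to the limit yields $\max\{0,g(\bar x)\}=0$ and $h(\bar x)=0$, which is feasibility of $\bar x$.

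The only step needing care is controlling the shifts $\upsilon_k\hat\mu^k$ and $\upsilon_k\hat\lambda^k$ under the \emph{growing} safeguard. This is exactly where the choice $\eta<1$ matters: it guarantees $\upsilon_k\varrho_k\to0$, so the enlarged safeguarding sets do not destroy the argument known from the rigid case.
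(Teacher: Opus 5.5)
Your proof is correct and follows the paper's argument essentially step by step. In case (i), $V_k\to 0$ because the penalty parameter is reduced only finitely often; in case (ii), you reduce to $\upsilon_k\downarrow 0$, multiply the bound on the augmented Lagrangian by $\upsilon_k$, and pass to the limit using \eqref{eq:elastic_safeguard_requirement_standard}. The only differences are cosmetic: you bound $\max\{0,g(x^k)\}$ via $z^k\le 0$ rather than via $g(x^k)-z^k=\max\{g(x^k),-\upsilon_k\hat\mu^k\}$, and you verify \eqref{eq:elastic_safeguard_requirement_standard} explicitly from $\upsilon_{k+1}\varrho_{k+1}=\eta\upsilon_k\varrho_k$, which the paper takes from its earlier discussion.
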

\begin{proof}
	Let $\bar x\in\R^n$ be an accumulation point of $\{x^k\}$,
	and pick an infinite index set $K\subseteq\N$ such that $x^k\to_K\bar x$.
	
	Assume that condition \ref{item:gamma_uniformly_pos} is valid.
	Then, for all sufficiently large $k\in\N$,
	$V_k\leq\tau V_{k-1}$ holds by construction of \cref{alg:ALM_elastic_safeguard}.
	Hence, $V_k\to 0$ follows, and this yields the convergences
	$\max\{g(x^k),-\upsilon_k\hat\mu^k\}\to 0$ and $h(x^k)\to 0$.
	The continuity of $g$ and $h$ yields $g(\bar x)\leq 0$ and $h(\bar x)=0$,
	i.e., $\bar x$ is feasible for \eqref{eq:NLP}.
	
	Now let condition \ref{item:AugLag_bounded} be valid.
	Without loss of generality, we assume that $\upsilon_k\downarrow 0$ as, otherwise,
	feasibility of $\bar x$ already follows from \ref{item:gamma_uniformly_pos}.
	Condition \ref{item:AugLag_bounded} implies the existence of a constant $c>0$ such that
	$\AugLag_{\upsilon_k}(x^k,\hat\mu^k,\hat\lambda^k)\leq c$
	is valid for all $k\in\N$. The definition of the augmented Lagrangian function yields
	\[
		\frac{1}{2}\norm{\max\{0,g(x^k)+\upsilon_k\hat\mu^k\}}^2 + \frac12\norm{h(x^k) + \upsilon_k\hat\lambda^k}^2
		\leq
		\upsilon_k(c-f(x^k))
	\]
	for each $k\in\N$.
	Taking the limit $k\to_K+\infty$ in this estimate while keeping \eqref{eq:elastic_safeguard_requirement_standard},
	continuity of $f$, $g$, as well as $h$, and the boundedness of $\{f(x^k)\}_{k\in K}$ in mind,
	we find
	\[
		\frac12\norm{\max\{0,g(\bar x)\}}^2 + \frac12\norm{h(\bar x)}^2\leq 0,
	\]
	which is equivalent to $g(\bar x)\leq 0$ and $h(\bar x)=0$, i.e., $\bar x$ is feasible for \eqref{eq:NLP}.
\end{proof}

A second preliminary result addresses the multiplier sequence $\{\mu^k\}$ produced by \cref{alg:ALM_elastic_safeguard}.

\begin{lemma}\label{lem:vanishing_multipliers}
	Let $\{(x^k,\mu^k,\lambda^k)\}$ be a primal-dual sequence generated by \cref{alg:ALM_elastic_safeguard},
	let $\bar x\in\R^n$ be an accumulation point of $\{x^k\}$,
	and let $K\subseteq\N$ be an infinite index set such that $x^k\to_K\bar x$.
	Then, for each $i\in\{1,\ldots,m\}$ such that $g_i(\bar x)<0$,
	$\mu^k_i=0$ holds for all large enough $k\in K$.
\end{lemma}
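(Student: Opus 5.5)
By \cref{rem:update_of_multiplier}, the update of the inequality multipliers reads componentwise
\[
\mu_i^k=\max\{0,\hat\mu_i^k+g_i(x^k)/\upsilon_k\}.
\]
It therefore suffices to show that $\upsilon_k\hat\mu_i^k+g_i(x^k)<0$ for all large $k\in K$, because dividing by $\upsilon_k>0$ then gives $\mu_i^k=0$. Fix $i$ with $g_i(\bar x)<0$. By continuity of $g_i$ and $x^k\to_K\bar x$, we have $g_i(x^k)\le g_i(\bar x)/2<0$ for all large $k\in K$. The proof thus reduces to showing that the term $\upsilon_k\hat\mu_i^k$ is eventually smaller than $-g_i(\bar x)/2$, or at least cannot compensate the negative quantity $g_i(x^k)$.

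I would split into two cases according to the behaviour of $\{\upsilon_k\}$, which is monotonically nonincreasing.

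In the first case $\upsilon_k\downarrow 0$. Then \eqref{eq:elastic_safeguard_requirement_standard} gives $\upsilon_k\norm{\hat\mu^k}\to 0$. This holds because $\hat\mu^k\in\varrho_k\safeYY^g$ with $\safeYY^g$ compact, and $\upsilon_k\varrho_k$ is multiplied by $\eta<1$ at each decrease of $\upsilon_k$ while being unchanged otherwise. Since $\upsilon_k\varrho_k$ decreases infinitely often in this case, $\upsilon_k\varrho_k\to 0$. Hence
\[
\upsilon_k\hat\mu_i^k+g_i(x^k)\le \upsilon_k\norm{\hat\mu^k}+g_i(\bar x)/2<0
\]
for large $k\in K$, as required.

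In the second case $\upsilon_k$ stays bounded away from zero. The update rule then implies that $\upsilon_k$ and $\varrho_k$ are eventually constant, and, as in the proof of \cref{lem:_feas}, $V_k\le\tau V_{k-1}$ holds for all large $k$, so $V_k\to0$. In particular $\norm{g(x^k)-z^k}\to 0$. Componentwise, $g_i(x^k)-z_i^k=\max\{g_i(x^k),-\upsilon_k\hat\mu_i^k\}\to0$, so $\max\{g_i(x^k),-\upsilon_k\hat\mu_i^k\}\to_K 0$. Since $g_i(x^k)\le g_i(\bar x)/2<0$, this maximum equals $-\upsilon_k\hat\mu_i^k$ for large $k\in K$, so $-\upsilon_k\hat\mu_i^k\ge g_i(x^k)$ there, and also $-\upsilon_k\hat\mu_i^k\to_K 0$. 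It follows that $\upsilon_k\hat\mu_i^k+g_i(x^k)\le \abs{\upsilon_k\hat\mu_i^k}+g_i(\bar x)/2<0$ eventually.

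Alternatively, once $\upsilon_k\equiv\bar\upsilon$ and $\varrho_k\equiv\bar\varrho$, one can simply use $\hat\mu^k\in\bar\varrho\safeYY^g$, which does not force nonnegativity. The $V_k\to0$ argument avoids needing any sign information on $\safeYY^g$.

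The main point to get right is the second case. There one must not rely on $\upsilon_k\hat\mu_i^k$ being small a priori, since the safeguarding set need not lie in $\R^m_+$ and $\upsilon_k$ does not vanish. The convergence $V_k\to0$ is what supplies the needed control.
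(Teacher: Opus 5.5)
Your proof is correct and is essentially the paper's argument: you split into the cases $\upsilon_k\downarrow 0$, where \eqref{eq:elastic_safeguard_requirement_standard} makes $\upsilon_k\hat\mu^k_i$ negligible against $g_i(x^k)$, and $\upsilon_k$ bounded away from zero, where $V_k\to 0$ forces $\upsilon_k\hat\mu^k_i\to_K 0$, and in both you conclude via the formula from \cref{rem:update_of_multiplier}. The one thing to drop is the ``alternative'' aside: boundedness of $\hat\mu^k\in\bar\varrho\safeYY^g$ alone does not give $\upsilon_k\hat\mu^k_i+g_i(x^k)<0$, so the $V_k\to 0$ argument you (and the paper) use is genuinely needed in that case.
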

\begin{proof}
	First, assume that $\{\upsilon_k\}$ remains bounded away from zero.
	As in the proof of \cref{lem:_feas}, this implies $V_k\to 0$ and, particularly,
	$\max\{g(x^k),-\upsilon_k\hat\mu^k\}\to 0$.
	Due to $g_i(\bar x)<0$, $\upsilon_k\hat\mu^k_i\to_K0$ follows.
	However, as $\{\upsilon_k\}$ remains bounded away from zero,
	$\hat\mu^k_i\to_K0$ is obtained, which yields $\hat\mu^k_i + g_i(x^k)/\upsilon_k<0$ for
	all large enough $k\in K$.
	Hence, \cref{rem:update_of_multiplier} implies $\mu^k_i=0$ for all large enough $k\in K$.
	
	Second, let us assume that $\upsilon_k\downarrow 0$.
	Then \eqref{eq:elastic_safeguard_requirement_standard} implies $\upsilon_k\hat\mu^k_i\to 0$,
	so $g_i(x^k) + \upsilon_k\hat\mu^k_i<0$ and, thus, $\hat\mu^k_i + g_i(x^k)/\upsilon_k<0$
	hold for all large enough $k\in K$.
	Again, due to \cref{rem:update_of_multiplier}, this implies $\mu^k_i=0$ for all large enough $k\in K$.
\end{proof}

\subsection{Approximate global minimization approach}\label{sec:conv_ana_approx_glob}

In this subsection, we assume that the primal iterates of \cref{alg:ALM_elastic_safeguard}
obey the approximate global optimality condition \eqref{eq:approximate_minimality}
for each $k\in\N$.

To start,
let us inspect feasibility of primal accumulation points
associated with \cref{alg:ALM_elastic_safeguard}.
This result is a counterpart of \cite[Thm~5.1]{birgin2014practical} for the rigidly safeguarded ALM.

\begin{theorem}\label{thm:approx_glob_feas}
	Let $\{\varepsilon_k\}$ be bounded, and
	let $\{x^k\}$ be a sequence generated by \cref{alg:ALM_elastic_safeguard}
	such that \eqref{eq:approximate_minimality} holds for each $k\in\N$.
	Finally, let $\bar x\in\R^n$ be an accumulation point of $\{x^k\}$.
	Then $\bar x$ is a global minimizer of
	\begin{equation}\label{eq:minimum_feasibility_problem}
		\minimize_x\quad
    	\feasmeas(x)
    	\coloneqq
    	\frac12\norm{\max\{0,g(x)\}}^2 + \frac12\norm{h(x)}^2
    	\quad\stt\quad
    	x\in\R^n.
	\end{equation}
	Particularly, if \eqref{eq:NLP} is feasible,
	then $\liminf_{k\to+\infty} V_k=0$ and $\bar x$ is feasible for \eqref{eq:NLP}.
\end{theorem}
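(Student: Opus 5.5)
The argument splits into two cases, according to whether $\{\upsilon_k\}$ stays bounded away from zero or tends to zero; since $\upsilon_k$ is nonincreasing, these are the only possibilities. Throughout, let $x^k\to_K\bar x$.

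\emph{Case 1: $\{\upsilon_k\}$ is bounded away from zero.} Here \cref{lem:_feas}\ref{item:gamma_uniformly_pos} directly gives feasibility of $\bar x$. Hence $\feasmeas(\bar x)=0\leq\feasmeas(x)$ for all $x\in\R^n$, so $\bar x$ is a global minimizer of \eqref{eq:minimum_feasibility_problem}. Moreover, $V_k\to 0$ (as in the proof of \cref{lem:_feas}), which gives $\liminf V_k=0$.

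\emph{Case 2: $\upsilon_k\downarrow 0$.} Fix an arbitrary $x\in\R^n$. I would multiply \eqref{eq:approximate_minimality} by $\upsilon_k$ and expand the definition of $\AugLag_{\upsilon_k}$:
\begin{align*}
&\upsilon_k f(x^k)+\tfrac12\norm{\max\{0,g(x^k)+\upsilon_k\hat\mu^k\}}^2+\tfrac12\norm{h(x^k)+\upsilon_k\hat\lambda^k}^2\\
&\quad\leq \upsilon_k f(x)+\tfrac12\norm{\max\{0,g(x)+\upsilon_k\hat\mu^k\}}^2+\tfrac12\norm{h(x)+\upsilon_k\hat\lambda^k}^2+\upsilon_k\varepsilon_k .
\end{align*}
Then I pass to the limit along $K$. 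On the left, $\upsilon_k f(x^k)\to_K 0$ because $f$ is continuous and $\{x^k\}_{k\in K}$ is bounded. Also $\upsilon_k\hat\mu^k\to0$ and $\upsilon_k\hat\lambda^k\to0$ by \eqref{eq:elastic_safeguard_requirement_standard}. This is exactly the point where elastic safeguarding is used: $\norm{\hat\mu^k}\leq\varrho_k\extent(\safeYY^g)$ and $\upsilon_k\varrho_k\to0$. On the right, $\upsilon_k f(x)\to0$, $\upsilon_k\varepsilon_k\to0$ since $\{\varepsilon_k\}$ is bounded, and the shifted terms tend to $\feasmeas(x)$. The maps $\max\{0,\cdot\}$ and $\norm{\cdot}^2$ are continuous, so the limit yields $\feasmeas(\bar x)\leq\feasmeas(x)$. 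Since $x$ was arbitrary, $\bar x$ is a global minimizer.

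For the second assertion, assume \eqref{eq:NLP} is feasible. Then the minimal value of $\feasmeas$ is $0$, so $\feasmeas(\bar x)=0$ and $\bar x$ is feasible. This holds for every accumulation point. To get $\liminf V_k=0$, only Case 2 needs work. I would use the first part of the limit argument: along $K$,
\[
\max\{g(x^k),-\upsilon_k\hat\mu^k\}\to_K\max\{g(\bar x),0\}=0
\quad\text{and}\quad
h(x^k)\to_K h(\bar x)=0 .
\]
Here I use $\upsilon_k\hat\mu^k\to0$ and $g(\bar x)\leq0$, and note that $g(x^k)-z^k=\max\{g(x^k),-\upsilon_k\hat\mu^k\}$. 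Hence $V_k\to_K0$.

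The main obstacle is that this needs some accumulation point to exist. If $\{x^k\}$ has none, the liminf claim is not obviously covered. Since the theorem fixes an accumulation point $\bar x$ in its hypotheses, the argument above suffices. The only delicate step is controlling the terms $\upsilon_k\hat\mu^k$ and $\upsilon_k\hat\lambda^k$, and \eqref{eq:elastic_safeguard_requirement_standard} handles that.
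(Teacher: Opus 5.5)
Your proof is correct and follows the paper's argument essentially step by step. It uses the same dichotomy on $\{\upsilon_k\}$, the same multiplication of \eqref{eq:approximate_minimality} by $\upsilon_k$ followed by a limit along $K$ (via \eqref{eq:elastic_safeguard_requirement_standard} and boundedness of $\{\varepsilon_k\}$), and the same identity $V_k=\max\{\norm{\max\{g(x^k),-\upsilon_k\hat\mu^k\}},\norm{h(x^k)}\}$ for the $\liminf$ claim. Your separate handling of $\liminf V_k=0$ in the bounded-penalty case (via $V_k\to0$) is a small but welcome addition, since the paper's final paragraph appeals to \eqref{eq:elastic_safeguard_requirement_standard}, which only applies when $\upsilon_k\downarrow0$.
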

\begin{proof}
	For the proof, we proceed by distinguishing two cases.
	
	Assume that $\{\upsilon_k\}$ stays bounded away from zero.
	Then $\bar x$ is a feasible point of \eqref{eq:NLP} by \cref{lem:_feas}
	and, trivially, a global minimizer of \eqref{eq:minimum_feasibility_problem}.
	
	Next, we assume that $\upsilon_k\downarrow 0$.
	Pick an arbitrary point $x\in\R^n$.
	For each $k\in\N$, \eqref{eq:approximate_minimality} and the definition 
	of the augmented Lagrangian function yield
	\begin{align*}
		\upsilon_k\,f(x^k) &+ \frac12\norm{\max\{0,g(x^k)+\upsilon_k\hat{\mu}^k\}}^2 + \frac12\norm{h(x^k) + \upsilon_k\hat{\lambda}^k}^2
		\\
		&\leq 
		\upsilon_k\,f(x) + \frac12\norm{\max\{0,g(x)+\upsilon_k\hat{\mu}^k\}}^2 + \frac12\norm{h(x) + \upsilon_k\hat{\lambda}^k}^2
		+ 
		\upsilon_k\,\varepsilon_k
		.
	\end{align*}
	Recalling the safeguarding property \eqref{eq:elastic_safeguard_requirement_standard},
	we know $\upsilon_k\hat{\mu}^k\to 0$ and $\upsilon_k\hat{\lambda}^k\to 0$,
	and the boundedness of $\{\varepsilon_k\}$ yields $\upsilon_k\,\varepsilon_k\to 0$.
	Pick an infinite set $K\subseteq\N$ such that $x^k\to_K\bar x$.
	Taking the limit $k\to_K+\infty$ in the above estimate
	while respecting the continuity of $f$, $g$, and $h$,
	we end up with
	\[
		\frac12\norm{\max\{0,g(\bar x)\}}^2 + \frac12\norm{h(\bar x)}^2
		\leq
		\frac12\norm{\max\{0,g(x)\}}^2 + \frac12\norm{h(x)}^2,
	\]
	i.e., $\bar x$ is a global minimizer of \eqref{eq:minimum_feasibility_problem}.
	To show the final statement, let \eqref{eq:NLP} be feasible.
	Then, due to the above arguments, $\bar x$ must be feasible for \eqref{eq:NLP}.
	Recalling \cref{step:ALM_elastic_safeguard:z} of \cref{alg:ALM_elastic_safeguard},
	we find that, for each $k\in\N$, $V_k = \max\{\norm{\max\{g(x^k),-\upsilon_k\hat{\mu}^k\}}, \norm{h(x^k)}\}$ is valid.
	Exploiting the convergences $x^k\to_K\bar x$ and \eqref{eq:elastic_safeguard_requirement_standard}
	as well as feasibility of $\bar x$,
	taking the limit $k\to_K+\infty$ yields $V_k\to_K0$, i.e., $\liminf_{k\to+\infty}V_k=0$.
\end{proof}

Our next result, a counterpart of \cite[Thm~5.2]{birgin2014practical},
proves global optimality of primal accumulation points associated with \cref{alg:ALM_elastic_safeguard}.
For its proof, which adapts the one of \cite[Thm~4.3]{demarchi2024local}, 
we will make use of the following lemma,
see, e.g., \cite[Lem.~3.2]{demarchi2024local} 
or \cite[Lem.~2.6]{KanzowKraemerMehlitzWachsmuthWerner2025}.

\begin{lemma}\label{lem:AL_for_feasible_points}
	Let \eqref{eq:NLP} be feasible.
	Fix $\mu\in\R^m$, $\lambda\in\R^\ell$, $\upsilon>0$, and a feasible point $x\in\R^n$ of \eqref{eq:NLP}.
	Then we have
	\[
		\AugLag_{\upsilon}(x,\mu,\lambda)
		\leq
		f(x) + \frac{\upsilon}{2}\bigl(\norm{\mu}^2 + \norm{\lambda}^2\bigr).
	\]
\end{lemma}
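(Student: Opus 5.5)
The bound follows by estimating the two penalty terms of $\AugLag_\upsilon(x,\mu,\lambda)$ separately, using feasibility of $x$, i.e., $g(x)\leq 0$ and $h(x)=0$. By definition,
\[
	\AugLag_\upsilon(x,\mu,\lambda)
	=
	f(x) + \frac{1}{2\upsilon}\norm{\max\{0,g(x)+\upsilon\mu\}}^2 + \frac{1}{2\upsilon}\norm{h(x)+\upsilon\lambda}^2,
\]
so it suffices to show that the second term is at most $\frac{\upsilon}{2}\norm{\mu}^2$ and the third at most $\frac{\upsilon}{2}\norm{\lambda}^2$.

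The equality term is immediate. Since $h(x)=0$, we get $\frac{1}{2\upsilon}\norm{h(x)+\upsilon\lambda}^2=\frac{1}{2\upsilon}\norm{\upsilon\lambda}^2=\frac{\upsilon}{2}\norm{\lambda}^2$.

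For the inequality term, argue componentwise. For each $i\in\{1,\ldots,m\}$, $g_i(x)\leq 0$ gives $g_i(x)+\upsilon\mu_i\leq\upsilon\mu_i$. Since $t\mapsto\max\{0,t\}$ is nondecreasing, $0\leq\max\{0,g_i(x)+\upsilon\mu_i\}\leq\max\{0,\upsilon\mu_i\}\leq\upsilon\abs{\mu_i}$. Squaring these nonnegative quantities and summing over $i$ yields $\norm{\max\{0,g(x)+\upsilon\mu\}}^2\leq\upsilon^2\norm{\mu}^2$. Dividing by $2\upsilon$ gives the claimed bound $\frac{\upsilon}{2}\norm{\mu}^2$.

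Adding the two estimates to $f(x)$ proves the lemma. The only point needing care is the monotonicity step for the $\max$-term; everything else is direct substitution.
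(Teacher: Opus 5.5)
Your proof is correct and follows essentially the same route as the paper: substitute $h(x)=0$ to get the equality term exactly, then bound the inequality term componentwise using $g_i(x)\leq 0$. The only difference is cosmetic: the paper splits into the cases $\mu_i+g_i(x)/\upsilon\leq 0$ and $>0$, whereas you absorb both cases into the monotonicity of $t\mapsto\max\{0,t\}$.
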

\begin{proof}
	Using the definition of the augmented Lagrangian and $h(x)=0$, we find
	\[
		\AugLag_{\upsilon}(x,\mu,\lambda)
		=
		f(x) + \frac1{2\upsilon}\norm{\max\{0,g(x) + \upsilon\mu\}}^2 + \frac{\upsilon}{2}\norm{\lambda}^2.
	\]
	Hence, in order to prove the assertion, it is enough to show
	\[
		\norm{\max\{0,\mu + g(x)/\upsilon\}}^2 \leq \norm{\mu}^2.
	\]
	The latter clearly holds if
	\begin{equation}\label{eq:some_estimate}
		{\max}^2\{0,\mu_i+g_i(x)/\upsilon\}\leq\mu_i^2
	\end{equation}
	is valid for all $i\in\{1,\ldots,m\}$, and this will be shown in the remainder of this proof.
	Let $i\in\{1,\ldots,m\}$ be fixed.
	If $\mu_i + g_i(x)/\upsilon\leq 0$ is valid, \eqref{eq:some_estimate} follows trivially.
	If $\mu_i +  g_i(x)/\upsilon> 0$ is true, $g_i(x)\leq 0$ implies
	$0<\mu_i + g_i(x)/\upsilon\leq \mu_i$, and monotonicity of the square on the positive real line verifies \eqref{eq:some_estimate}.
\end{proof}

\begin{theorem}\label{thm:approx_glob_opt}
	Let \eqref{eq:NLP} be feasible,
	let $\{\varepsilon_k\}$ be a null sequence, and
	let $\{x^k\}$ be a sequence generated by \cref{alg:ALM_elastic_safeguard}
	such that \eqref{eq:approximate_minimality} holds for each $k\in\N$.
	Finally, let $\bar x\in\R^n$ be an accumulation point of $\{x^k\}$.
	Then $\bar x$ is a global minimizer of \eqref{eq:NLP}.
\end{theorem}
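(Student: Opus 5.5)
By \cref{thm:approx_glob_feas}, $\bar x$ is feasible, since \eqref{eq:NLP} is feasible and a null sequence $\{\varepsilon_k\}$ is bounded. It therefore remains to show $f(\bar x)\leq f(x)$ for every feasible $x$. Fix such an $x$ and an infinite set $K\subseteq\N$ with $x^k\to_K\bar x$. The basic inequality is \eqref{eq:approximate_minimality}, combined with \cref{lem:AL_for_feasible_points} applied at the feasible point $x$:
\[
	\AugLag_{\upsilon_k}(x^k,\hat\mu^k,\hat\lambda^k)
	\leq
	f(x) + \frac{\upsilon_k}{2}\bigl(\norm{\hat\mu^k}^2+\norm{\hat\lambda^k}^2\bigr) + \varepsilon_k .
\]

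\textbf{Lower bound on the left side.} Expanding the penalty terms gives
\[
	\frac{1}{2\upsilon_k}\norm{\max\{0,g(x^k)+\upsilon_k\hat\mu^k\}}^2\geq 0
	\quad\text{and}\quad
	\frac{1}{2\upsilon_k}\norm{h(x^k)+\upsilon_k\hat\lambda^k}^2
	\geq \frac{1}{2\upsilon_k}\norm{h(x^k)}^2 + \innprod{\hat\lambda^k}{h(x^k)} + \frac{\upsilon_k}{2}\norm{\hat\lambda^k}^2 .
\]
The $\frac{\upsilon_k}{2}\norm{\hat\lambda^k}^2$ terms cancel on both sides, and $\frac{1}{2\upsilon_k}\norm{h(x^k)}^2\geq 0$ can be dropped. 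This yields
\[
	f(x^k)+\innprod{\hat\lambda^k}{h(x^k)}\leq f(x)+\frac{\upsilon_k}{2}\norm{\hat\mu^k}^2+\varepsilon_k .
\]
It is simpler, however, to drop both penalty terms completely and obtain
\[
	f(x^k)\leq f(x)+\frac{\upsilon_k}{2}\bigl(\norm{\hat\mu^k}^2+\norm{\hat\lambda^k}^2\bigr)+\varepsilon_k .
\]
From here the argument splits into two cases.

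\textbf{Case $\upsilon_k\downarrow 0$.} The enhanced safeguarding property \eqref{eq:elastic_safeguard_requirement_enhanced} gives $\upsilon_k\norm{(\hat\mu^k,\hat\lambda^k)}^2\to 0$. This is exactly where $\eta<\sqrt\gamma$ is needed. Passing to the limit $k\to_K+\infty$ and using continuity of $f$ and $\varepsilon_k\to 0$ yields $f(\bar x)\leq f(x)$.

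\textbf{Case $\{\upsilon_k\}$ bounded away from zero.} This is the main obstacle, since the error term $\frac{\upsilon_k}{2}(\norm{\hat\mu^k}^2+\norm{\hat\lambda^k}^2)$ need not vanish. In this case the penalty parameter is eventually constant, $\upsilon_k=\bar\upsilon$, so $\varrho_k$ is also eventually constant. Hence $\{(\hat\mu^k,\hat\lambda^k)\}$ is bounded, and along a further subsequence of $K$ it converges to some $(\bar\mu,\bar\lambda)$. Moreover $V_k\to 0$, as in the proof of \cref{lem:_feas}.

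Instead of discarding the penalty terms, I keep them and pass to the limit in \eqref{eq:approximate_minimality} exactly:
\[
	f(\bar x)+\frac{1}{2\bar\upsilon}\norm{\max\{0,g(\bar x)+\bar\upsilon\bar\mu\}}^2+\frac{\bar\upsilon}{2}\norm{\bar\lambda}^2
	\leq f(x)+\frac{1}{2\bar\upsilon}\norm{\max\{0,g(x)+\bar\upsilon\bar\mu\}}^2+\frac{\bar\upsilon}{2}\norm{\bar\lambda}^2 ,
\]
where $h(\bar x)=0$ because $\bar x$ is feasible. The $\bar\lambda$ terms cancel.

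To handle the $\mu$-terms I use $V_k\to 0$. It gives $\max\{g(x^k),-\bar\upsilon\hat\mu^k\}\to 0$, hence $\max\{g_i(\bar x),-\bar\upsilon\bar\mu_i\}=0$ for each $i$. Since $\bar\mu\geq 0$ may be assumed (as for $\safeYY^g=[0,\mu_{\max}]$), this means $\bar\mu_i g_i(\bar x)=0$, i.e., complementarity holds. Consequently, for each $i$, either $\bar\mu_i=0$, or $g_i(\bar x)=0$ and then $\max\{0,g_i(\bar x)+\bar\upsilon\bar\mu_i\}=\bar\upsilon\bar\mu_i$. In both cases
\[
	\frac{1}{2\bar\upsilon}\norm{\max\{0,g(\bar x)+\bar\upsilon\bar\mu\}}^2=\frac{\bar\upsilon}{2}\norm{\bar\mu}^2 .
\]
On the right side, feasibility of $x$ gives $g_i(x)\leq 0$, so the elementwise estimate \eqref{eq:some_estimate} from the proof of \cref{lem:AL_for_feasible_points} bounds that term by $\frac{\bar\upsilon}{2}\norm{\bar\mu}^2$. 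The two $\mu$-terms therefore cancel, leaving $f(\bar x)\leq f(x)$.

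If the general compact set $\safeYY^g$ is allowed to contain vectors with negative components, I would first note that $\max\{0,\cdot\}$ together with $V_k\to 0$ still gives the required equality. Failing that, I would restrict to $\safeYY^g\subseteq\R^m_+$, which is the standard setting.
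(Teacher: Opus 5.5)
Your proof is correct. The feasibility step and the case $\upsilon_k\downarrow 0$ match the paper's argument: discard the nonnegative penalty terms and invoke \eqref{eq:elastic_safeguard_requirement_enhanced}.

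When $\{\upsilon_k\}$ stays bounded away from zero, you take a somewhat different route. You extract a further subsequence with $(\hat\mu^k,\hat\lambda^k)\to(\bar\mu,\bar\lambda)$ and pass to the limit in \eqref{eq:approximate_minimality} itself. You then use the complementarity between $\bar x$ and $\bar\mu$ inherited from $V_k\to 0$. This identifies the left penalty term with $\frac{\bar\upsilon}{2}\norm{\bar\mu}^2$, which dominates the right one by \eqref{eq:some_estimate}.

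The paper stays at the level of the sequence instead. Writing $\max\{0,g(x^k)+\upsilon_k\hat\mu^k\}=g(x^k)-z^k+\upsilon_k\hat\mu^k$, it expands $\AugLag_{\upsilon_k}(x^k,\hat\mu^k,\hat\lambda^k)-\frac{\upsilon_k}{2}\bigl(\norm{\hat\mu^k}^2+\norm{\hat\lambda^k}^2\bigr)$ exactly into $f(x^k)$, plus nonnegative quadratic terms in $g(x^k)-z^k$ and $h(x^k)$, plus $\innprod{\hat\mu^k}{g(x^k)-z^k}+\innprod{\hat\lambda^k}{h(x^k)}$. The inner products vanish because $V_k\to 0$ and the estimates are bounded.

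What each buys: the paper's route avoids the extra subsequence and the componentwise case analysis. Your version gives, as a by-product, that $\bar x$ globally minimizes the limiting function $\AugLag_{\bar\upsilon}(\cdot,\bar\mu,\bar\lambda)$, with $\bar\mu$ complementary to $\bar x$.

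Your hedge about the sign of $\bar\mu$ is unnecessary. The identity $\max\{g_i(\bar x),-\bar\upsilon\bar\mu_i\}=0$ already forces $-\bar\upsilon\bar\mu_i\leq 0$, i.e., $\bar\mu_i\geq 0$, together with $\bar\mu_i g_i(\bar x)=0$. So your argument, like the paper's, works for an arbitrary compact $\safeYY^g$ without assuming $\safeYY^g\subseteq\R^m_+$.
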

\begin{proof}
	To start, observe that $\bar x$ is a feasible point of \eqref{eq:NLP}
	according to \cref{thm:approx_glob_feas}.
	Throughout, let $K\subseteq\N$ be an infinite index set such that $x^k\to_K\bar x$.
	For some feasible point $x\in\R^n$ of \eqref{eq:NLP}
	and arbitrary $k\in\N$, \eqref{eq:approximate_minimality} and \cref{lem:AL_for_feasible_points} guarantee
	\begin{equation}\label{eq:estimate_glob_opt}
		\AugLag_{\upsilon_k}(x^k,\hat\mu^k,\hat\lambda^k)
		-
		\frac{\upsilon_k}{2}\bigl(\norm{\hat{\mu}^k}^2 + \norm{\hat{\lambda}^k}^2\bigr)
		\leq
		f(x) + \varepsilon_k.
	\end{equation}
	Now, we proceed by a distinction of cases.
	
	First, assume that $\{\upsilon_k\}$ remains bounded away from zero.
	As in the proof of \cref{thm:approx_glob_feas}, this implies $V_k\to 0$,
	i.e., $g(x^k)-z^k\to 0$ and $h(x^k)\to 0$.
	Furthermore, $\{\varrho_k\}$ remains bounded by construction,
	and so do $\{\hat\mu^k\}$ and $\{\hat\lambda^k\}$.
	We rewrite \eqref{eq:estimate_glob_opt} according to
	\[
		f(x^k) 
		+ 
		\frac{1}{2\upsilon_k}\norm{g(x^k)-z^k}^2 + \langle\hat\mu^k,g(x^k)-z^k\rangle
		+
		\frac{1}{2\upsilon_k}\norm{h(x^k)}^2 + \langle\hat\lambda^k,h(x^k)\rangle
		\leq
		f(x) + \varepsilon_k.
	\]
	Recalling that $\{\upsilon_k\}$ remains bounded away from zero
	while $\varepsilon_k\searrow 0$,
	we can take the limit $k\to_K+\infty$ in order to find $f(\bar x)\leq f(x)$,
	owing to feasibility of $\bar{x}$ and continuity of $f$, $g$, and $h$.
	The claim follows as $x\in\R^n$ is an arbitrary feasible point of \eqref{eq:NLP}.	
	
	Second, assume that $\upsilon_k\downarrow 0$.
	Then the definition of the augmented Lagrangian and \eqref{eq:estimate_glob_opt} imply validity of
	\[
		f(x^k) - \frac{\upsilon_k}{2}\bigl(\norm{\hat{\mu}^k}^2 + \norm{\hat{\lambda}^k}^2\bigr)
		\leq
		f(x) + \varepsilon_k
	\]
	for each $k\in\N$. Taking the limit $k\to_K+\infty$ while recalling $\varepsilon_k\searrow 0$, \eqref{eq:elastic_safeguard_requirement_enhanced},
	and the continuity of $f$, we find $f(\bar x)\leq f(x)$.
	Since $x\in\R^n$ was an arbitrary feasible point of \eqref{eq:NLP},
	the assertion is validated.
\end{proof}

Let us note that the proof of \cref{thm:approx_glob_opt} does not exploit a componentwise
sign of $\{\hat\mu^k\}$ and, thus, in principle, allows $\safeYY^g$ to be quite arbitrary
in \cref{alg:ALM_elastic_safeguard}. This is in contrast to the proof of
\cite[Thm~5.2]{birgin2014practical} which exploits $\{\hat\mu^k\}\subseteq\R^m_+$, i.e.,
$\safeYY^g\subseteq\R^m_+$.

\subsection{Approximate stationarity approach}\label{sec:conv_ana_approx_stat}

In this subsection, we consider the situation where the primal iterates 
of \cref{alg:ALM_elastic_safeguard} satisfy \eqref{eq:subproblem_stationarity}
for each $k\in\N$. Typically, this condition can be guaranteed by invoking
standard solvers for the treatment of the unconstrained subproblems \eqref{eq:subproblem},
such as gradient descent or Newton methods.

We start our analysis by investigating feasibility of primal accumulation
points associated with \eqref{eq:NLP}. 
Our first result is an analogue of \cref{thm:approx_glob_feas}
and a counterpart of \cite[Thm~6.3]{birgin2014practical}.

\begin{theorem}\label{thm:approx_stat_feas_stat}
	Let $\{\varepsilon_k\}$ be bounded, and
	let $\{x^k\}$ be a sequence generated by \cref{alg:ALM_elastic_safeguard}
	such that \eqref{eq:subproblem_stationarity} holds for each $k\in\N$.
	Finally, let $\bar x\in\R^n$ be an accumulation point of $\{x^k\}$.
	Then $\bar x$ is a stationary point of the feasibility problem \eqref{eq:minimum_feasibility_problem}.
\end{theorem}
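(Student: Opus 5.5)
We distinguish two cases, as in the proof of \cref{thm:approx_glob_feas}. Throughout, let $K\subseteq\N$ be an infinite index set such that $x^k\to_K\bar x$. Stationarity of $\bar x$ for \eqref{eq:minimum_feasibility_problem} means
\[
\nabla\feasmeas(\bar x)=g'(\bar x)^\top\max\{0,g(\bar x)\}+h'(\bar x)^\top h(\bar x)=0,
\]
using that $\tfrac12\max^2\{0,\cdot\}$ is continuously differentiable.

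\emph{Case 1: $\{\upsilon_k\}$ remains bounded away from zero.} \cref{lem:_feas} then shows that $\bar x$ is feasible for \eqref{eq:NLP}. Hence $\max\{0,g(\bar x)\}=0$ and $h(\bar x)=0$, so $\nabla\feasmeas(\bar x)=0$ trivially.

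\emph{Case 2: $\upsilon_k\downarrow 0$.} First compute
\[
\nabla_x\AugLag_{\upsilon_k}(x^k,\hat\mu^k,\hat\lambda^k)=\nabla f(x^k)+\tfrac1{\upsilon_k}\Bigl(g'(x^k)^\top\max\{0,g(x^k)+\upsilon_k\hat\mu^k\}+h'(x^k)^\top\bigl(h(x^k)+\upsilon_k\hat\lambda^k\bigr)\Bigr).
\]
Multiplying \eqref{eq:subproblem_stationarity} by $\upsilon_k$ gives
\[
\Bigl\|\upsilon_k\nabla f(x^k)+g'(x^k)^\top\max\{0,g(x^k)+\upsilon_k\hat\mu^k\}+h'(x^k)^\top\bigl(h(x^k)+\upsilon_k\hat\lambda^k\bigr)\Bigr\|\leq\upsilon_k\varepsilon_k .
\]
Now let $k\to_K+\infty$:
\begin{itemize}
\item the right-hand side tends to zero because $\{\varepsilon_k\}$ is bounded;
\item $\upsilon_k\nabla f(x^k)\to_K0$ by continuity of $\nabla f$ along the convergent subsequence;
\item the safeguarding property \eqref{eq:elastic_safeguard_requirement_standard} gives $\upsilon_k\hat\mu^k\to0$ and $\upsilon_k\hat\lambda^k\to0$, so continuity of $g,h,g',h'$ and of $\max\{0,\cdot\}$ yields $\max\{0,g(x^k)+\upsilon_k\hat\mu^k\}\to_K\max\{0,g(\bar x)\}$ and $h(x^k)+\upsilon_k\hat\lambda^k\to_Kh(\bar x)$.
\end{itemize}
Passing to the limit therefore gives $\nabla\feasmeas(\bar x)=0$.

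The only delicate point is the role of the elastic safeguard. Here $\hat\mu^k,\hat\lambda^k$ may be unbounded, since $\varrho_k\to\infty$ when $\upsilon_k\downarrow0$. What is needed is exactly \eqref{eq:elastic_safeguard_requirement_standard}, i.e. $\upsilon_k\varrho_k\to0$, which holds because $\upsilon_{k+1}\varrho_{k+1}=\eta\upsilon_k\varrho_k$ at each penalty reduction with $\eta<1$. The enhanced property \eqref{eq:elastic_safeguard_requirement_enhanced} is not required. Everything else is routine continuity.
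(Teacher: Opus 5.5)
Your proposal is correct and follows the paper's proof essentially verbatim. You use the same case split: either $\{\upsilon_k\}$ stays bounded away from zero, where \cref{lem:_feas} gives feasibility, or $\upsilon_k\downarrow 0$, where you multiply \eqref{eq:subproblem_stationarity} by $\upsilon_k$ and pass to the limit via \eqref{eq:elastic_safeguard_requirement_standard}; the only cosmetic difference is that you work with the norm inequality directly, whereas the paper writes it as an identity with vectors $\xi^k$ satisfying $\norm{\xi^k}\leq 1$.
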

\begin{proof}
	Throughout, let $K\subseteq\N$ be an infinite index set such that $x^k\to_K\bar x$.
	Again, we proceed by a distinction of cases.
	
	Let $\{\upsilon_k\}$ be bounded away from zero.
	Then, according to \cref{lem:_feas}, $\bar x$ is feasible for \eqref{eq:NLP} and, thus,
	a global minimizer of \eqref{eq:minimum_feasibility_problem}.
	Particularly, $\bar x$ is a stationary point of \eqref{eq:minimum_feasibility_problem}.
	
	Next, we assume that $\upsilon_k\downarrow 0$.
	For each $k\in\N$, due to validity of \eqref{eq:subproblem_stationarity}, 
	we find $\xi^k\in\R^n$ such that $\norm{\xi^k}\leq 1$ and
	\begin{equation}\label{eq:approx_stat}
		\varepsilon_k\,\xi^k 
		=
		\nabla f(x^k)
		+
		g'(x^k)^\top\max\left\{0,\hat\mu^k + g(x^k)/\upsilon_k\right\}
		+
		h'(x^k)^\top\left(\hat\lambda^k + h(x^k)/\upsilon_k\right)
	\end{equation}
	are valid. Multiplying the latter identity by $\upsilon_k$ shows that
	\begin{align*}
		\upsilon_k\,\varepsilon_k\,\xi^k
		=
		\upsilon_k\,\nabla f(x^k)
		+
		g'(x^k)^\top\max\{0,g(x^k) + \upsilon_k\hat\mu^k\}
		+
		h'(x^k)^\top(h(x^k) + \upsilon_k\hat\lambda^k)
	\end{align*}
	is valid for all $k\in\N$.
	Taking the limit $k\to_K+\infty$ in this identity
	while respecting the boundedness of $\{\varepsilon_k\}$ and $\{\xi^k\}$,
	\eqref{eq:elastic_safeguard_requirement_standard}, and the continuity of $f$, $g$, $h$, $g'$, and $h'$,
	we find
	\[
		0 = g'(\bar x)^\top\max\{0,g(\bar x)\} + h'(\bar x)^\top h(\bar x),
	\]
	and the latter means that $\bar x$ is stationary for \eqref{eq:minimum_feasibility_problem}.
\end{proof}

Let us recall the notion of approximately stationary points of \eqref{eq:NLP}
in the sense of \cite[Def.~3.1]{birgin2014practical}.

\begin{definition}\label{def:AKKT}
	A feasible point $\bar x\in\R^n$ of \eqref{eq:NLP} is referred to as an
	approximately stationary point of \eqref{eq:NLP} whenever there exist sequences
	$\{x^k\}\subseteq\R^n$, $\{\mu^k\}\subseteq\R^m_+$, and $\{\lambda^k\} \subseteq\R^\ell$
	such that the convergences
	\begin{align*}
		x^k&\to\bar x,
		\\
		\nabla_x\Lag(x^k,\mu^k,\lambda^k)&\to 0,
		\\
		\max\{g_i(x^k),-\mu^k_i\}&\to 0
		\quad \forall i \in \{1,\ldots,m\}
	\end{align*}
	hold.
\end{definition}

It is well known that local minimizers of \eqref{eq:NLP} are approximately stationary points,
even in the absence of any constraint qualification,
see, e.g., \cite[Thm~3.1]{birgin2014practical},
and that very mild constraint qualifications like the so-called cone-continuity property,
introduced and studied in \cite{AndreaniMartinezRamosSilva2016},
are enough to ensure that an approximately stationary point is already stationary.

Our next result, an adaptation of \cite[Thm~6.2]{birgin2014practical},
validates that primal accumulation points associated with \cref{alg:ALM_elastic_safeguard}
are approximately stationary.
Keeping the above in mind, \cref{alg:ALM_elastic_safeguard} is, thus,
in position to compute stationary points of \eqref{eq:NLP}
in the presence of mild constraint qualifications.

\begin{theorem}\label{thm:approx_stat_akkt}
	Let $\{\varepsilon_k\}$ be a null sequence,
	and let $\{x^k\}$ be a sequence generated by \cref{alg:ALM_elastic_safeguard}
	such that \eqref{eq:subproblem_stationarity} holds for each $k\in\N$.
	Finally, let $\bar x$ be an accumulation point of $\{x^k\}$ 
	which is feasible for \eqref{eq:NLP}.
	Then $\bar x$ is approximately stationary.
\end{theorem}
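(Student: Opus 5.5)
We will verify \cref{def:AKKT} directly, using the primal-dual sequence $\{(x^k,\mu^k,\lambda^k)\}_{k\in K}$ generated by \cref{alg:ALM_elastic_safeguard}, where $K\subseteq\N$ is an infinite index set with $x^k\to_K\bar x$. The first two requirements are nearly immediate. By \cref{rem:update_of_multiplier}, $\mu^k=\max\{0,\hat\mu^k+g(x^k)/\upsilon_k\}\in\R^m_+$. By the design of the multiplier update \eqref{eq:standard_multiplier_update}, we have $\nabla_x\Lag(x^k,\mu^k,\lambda^k)=\nabla_x\AugLag_{\upsilon_k}(x^k,\hat\mu^k,\hat\lambda^k)$. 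Hence \eqref{eq:subproblem_stationarity} together with $\varepsilon_k\searrow0$ gives $\nabla_x\Lag(x^k,\mu^k,\lambda^k)\to_K 0$. After relabeling the subsequence indexed by $K$, we obtain sequences of the form required in \cref{def:AKKT}.

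It remains to show $\max\{g_i(x^k),-\mu^k_i\}\to_K0$ for each $i\in\{1,\ldots,m\}$. We split according to the sign of $g_i(\bar x)$; by feasibility, $g_i(\bar x)\le 0$.

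If $g_i(\bar x)<0$, \cref{lem:vanishing_multipliers} yields $\mu^k_i=0$ for all large $k\in K$. Then $\max\{g_i(x^k),-\mu^k_i\}=\max\{g_i(x^k),0\}=0$ eventually.

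If $g_i(\bar x)=0$, we use $\mu^k_i\ge0$ to get
\[
	g_i(x^k)\le\max\{g_i(x^k),-\mu^k_i\}\le\max\{g_i(x^k),0\}.
\]
Both outer bounds tend to $g_i(\bar x)=0$ along $K$ by continuity of $g_i$, so the middle term converges to $0$ as well.

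This settles the claim, and no case distinction on $\{\upsilon_k\}$ is needed at this level. That distinction is hidden inside \cref{lem:vanishing_multipliers}, which in turn relies on \eqref{eq:elastic_safeguard_requirement_standard}. The only genuinely delicate point is therefore the inactive-constraint case: there the elastic growth of $\varrho_k$ could a priori keep $\hat\mu^k_i$ large, and \cref{lem:vanishing_multipliers} is exactly what rules this out. Since that lemma is already available, I expect no real obstacle; the remaining work consists of the routine checks above and verifying that the subsequence indexed by $K$ fits \cref{def:AKKT}.
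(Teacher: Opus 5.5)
Your proposal is correct and follows the paper's proof essentially step for step. You identify $\nabla_x\Lag(x^k,\mu^k,\lambda^k)$ with the gradient of the augmented Lagrangian to get the vanishing stationarity residual, and you use \cref{rem:update_of_multiplier} for $\mu^k\geq 0$; you then split the remaining check into active constraints, where $g_i(x^k)\to_K 0$ and $\mu^k_i\geq 0$ suffice, and inactive constraints, which \cref{lem:vanishing_multipliers} handles.
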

\begin{proof}
	Throughout, let $K\subseteq\N$ be an infinite index set such that $x^k\to_K\bar x$.
	For each $k\in\N$, validity of \eqref{eq:subproblem_stationarity} guarantees the existence
	of $\xi^k\in\R^n$ with $\norm{\xi^k}\leq 1$ and \eqref{eq:approx_stat}.
	Hence, \cref{rem:update_of_multiplier} and the multiplier update rule from \cref{step:ALM_elastic_safeguard:y}
	yield $\varepsilon_k\,\xi^k = \nabla_x\Lag(x^k,\mu^k,\lambda^k)$ for each $k\in\N$,
	so that $\varepsilon_k\searrow 0$ implies $\nabla_x\Lag(x^k,\mu^k,\lambda^k)\to 0$.
	Let us also note that $\{\mu^k\}\subseteq\R^m_+$ holds by \cref{rem:update_of_multiplier}.
	
	As we already know $x^k\to_K\bar x$, it remains to prove the convergences $\max\{g_i(x^k),-\mu^k_i\}\to_K0$
	for all $i\in\{1,\ldots,m\}$ in order to verify the claim.
	For $i\in \{1,\ldots,m\}$ such that $g_i(\bar x)=0$, 
	this convergence is immediate from $g_i(x^k)\to_K 0$ and $\{\mu^k_i\}\subseteq\R_+$.
	Hence, it remains to consider the case where $g_i(\bar x)<0$ holds.
	In this situation, according to \cref{lem:vanishing_multipliers}, $\mu^k_i=0$ is valid for all large enough $k\in K$,
	and $\max\{g_i(x^k),-\mu^k_i\}\to_K 0$ follows trivially.	
\end{proof}

\subsection{The presence of qualification conditions}\label{sec:CQs}

Though common in nonconvex optimization,
an obvious drawback of \cref{thm:approx_stat_akkt} is that feasibility of the primal accumulation point under consideration
has to be assumed. Only in that case is one guaranteed to find an approximately stationary point.
In order to ensure feasibility \emph{and} stationarity of primal accumulation points,
one typically postulates validity of qualification conditions that 
apply not only to feasible points, but to infeasible points as well.
For the rigidly safeguarded ALM,
a suitable qualification condition has been shown to be the so-called extended MFCQ,
see \cite[Thm~2]{BirginCastilloMartinez2005} for the origin of this result.
In \cite{BirginHaeserMaculanRamirez2025}, it has been shown that a weaker qualification condition
called (extended) strong quasinormality is sufficient if the standard (without safeguarding) or safeguarded ALM is considered. 
As \cref{alg:ALM_elastic_safeguard} bridges between both methods,
there is reasonable hope that the results from \cite{BirginHaeserMaculanRamirez2025} 
carry over to the situation on hand.

Let us start our considerations by recalling the definitions of extended MFCQ and extended strong quasinormality,
see \cite[Def.~2.1]{kanzow2017example} and \cite[Def.~2.1]{BirginHaeserMaculanRamirez2025}.
\begin{definition}\label{def:extended_CQ}
	Let some point $\bar x\in\R^n$ be fixed, not necessarily feasible for \eqref{eq:NLP}.
	\begin{enumerate}
	\item We say that extended MFCQ holds at $\bar x$
	if there does not exist a nonvanishing tuple $(\mu,\lambda)\in\R^m\times\R^\ell$
	such that 
	\begin{equation}\label{eq:singular_extended_multiplier}
		g'(\bar x)^\top\mu + h'(\bar x)^\top\lambda=0,\quad
		\mu\geq 0,\quad
		\mu^\top\min\{0,g(\bar x)\}=0.
	\end{equation}
	\item 
	We say that extended strong quasinormality holds at $\bar x$
	if there do not exist a nonvanishing tuple $(\mu,\lambda)\in\R^m\times\R^\ell$
	and, for each $i\in\{1,\ldots,m\}$ such that $\mu_i\neq 0$ and each $j\in\{1,\ldots,\ell\}$ such that $\lambda_j\neq 0$,
	sequences $\{y^{i,k}\}\subseteq\R^n$ and $\{z^{j,k}\}\subseteq\R^n$
	such that $y^{i,k}\to\bar x$ and $z^{j,k}\to\bar x$ as $k\to+\infty$, $g_i(y^{i,k})>0$ and $\lambda_j\,h_j(z^{j,k})>0$
	for all $k\in\N$, and \eqref{eq:singular_extended_multiplier}.
	\end{enumerate}
\end{definition}

Note that, in contrast to \cite{BirginHaeserMaculanRamirez2025}, 
we use the term \emph{extended} strong quasinormality here as, even for feasible points,
extended strong quasinormality is more restrictive than just quasinormality.

Our first result,
which is an adaptation of \cite[Thm~2.4]{kanzow2017example},
investigates an accumulation point $\bar x\in\R^n$ of a sequence $\{x^k\}$ generated by \cref{alg:ALM_elastic_safeguard} where extended MFCQ holds. 

\begin{theorem}\label{thm:approx_stat_kkt_EMFCQ}
	Let $\{\varepsilon_k\}$ be a null sequence, and let $\{(x^k,\mu^k,\lambda^k)\}$ be a primal-dual sequence generated by \cref{alg:ALM_elastic_safeguard}
	such that \eqref{eq:subproblem_stationarity} holds for each $k\in\N$.
	Let $\bar x\in\R^n$ be an accumulation point of $\{x^k\}$ where extended MFCQ is valid,
	and let $K\subseteq\N$ be an infinite index set such that $x^k\to_K\bar x$.
	Then $\{(\mu^k,\lambda^k)\}_{k\in K}$ is bounded.
	Furthermore, $\bar x$ is stationary for \eqref{eq:NLP}, 
	and each accumulation point of $\{(\mu^k,\lambda^k)\}_{k\in K}$
	is a multiplier certificating stationarity of $\bar x$.
\end{theorem}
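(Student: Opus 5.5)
The plan is the standard normalization-by-contradiction argument. From \eqref{eq:subproblem_stationarity}, \cref{rem:update_of_multiplier} and the update in \cref{step:ALM_elastic_safeguard:y}, we have $\nabla_x\Lag(x^k,\mu^k,\lambda^k)=\varepsilon_k\xi^k$ with $\norm{\xi^k}\le 1$ and $\mu^k\ge 0$ for all $k$, as in the proof of \cref{thm:approx_stat_akkt}.

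\textbf{Boundedness.} Suppose $\{(\mu^k,\lambda^k)\}_{k\in K}$ is unbounded. Pass to an infinite $K'\subseteq K$ with $t_k\coloneqq\norm{(\mu^k,\lambda^k)}\to_{K'}\infty$ and $(\mu^k,\lambda^k)/t_k\to_{K'}(\tilde\mu,\tilde\lambda)\neq 0$. Dividing the stationarity identity by $t_k$ and letting $k\to_{K'}\infty$, continuity of $\nabla f,g',h'$ and $\varepsilon_k\to 0$ give
\[
g'(\bar x)^\top\tilde\mu+h'(\bar x)^\top\tilde\lambda=0,\qquad \tilde\mu\ge 0 .
\]
For the complementarity part $\tilde\mu^\top\min\{0,g(\bar x)\}=0$, take any $i$ with $g_i(\bar x)<0$. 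By \cref{lem:vanishing_multipliers}, applied with $K'$, we get $\mu^k_i=0$ for large $k\in K'$, hence $\tilde\mu_i=0$. This lemma does not require feasibility of $\bar x$, which is exactly why it fits the extended setting. So $(\tilde\mu,\tilde\lambda)$ is a nonvanishing solution of \eqref{eq:singular_extended_multiplier}, contradicting extended MFCQ.

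\textbf{Feasibility of $\bar x$.} I expect this to be the step needing most care. If $\{\upsilon_k\}$ is bounded away from zero, \cref{lem:_feas} applies directly. Otherwise $\upsilon_k\downarrow 0$. In that case \cref{thm:approx_stat_feas_stat} gives
\[
g'(\bar x)^\top\max\{0,g(\bar x)\}+h'(\bar x)^\top h(\bar x)=0 .
\]
Now set $\mu\coloneqq\max\{0,g(\bar x)\}$ and $\lambda\coloneqq h(\bar x)$. Then $\mu\ge 0$ and $\mu_i\min\{0,g_i(\bar x)\}=0$ componentwise, so $(\mu,\lambda)$ solves \eqref{eq:singular_extended_multiplier}. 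Extended MFCQ forces $(\mu,\lambda)=0$, i.e., $g(\bar x)\le 0$ and $h(\bar x)=0$. Only boundedness of $\{\varepsilon_k\}$ and \eqref{eq:elastic_safeguard_requirement_standard} are used here, so the elastic safeguard causes no difficulty.

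\textbf{Multiplier limits.} Let $(\bar\mu,\bar\lambda)$ be an accumulation point of the bounded sequence $\{(\mu^k,\lambda^k)\}_{k\in K}$, along some $K''\subseteq K$. Passing to the limit in $\nabla_x\Lag(x^k,\mu^k,\lambda^k)=\varepsilon_k\xi^k$ gives $\nabla_x\Lag(\bar x,\bar\mu,\bar\lambda)=0$, and $\bar\mu\ge 0$ by closedness of $\R^m_+$. For complementarity, any $i$ with $g_i(\bar x)<0$ has $\mu^k_i=0$ eventually on $K''$ by \cref{lem:vanishing_multipliers}, so $\bar\mu_i=0$. Hence $\bar\mu^\top g(\bar x)=0$. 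Together with feasibility, $(\bar\mu,\bar\lambda)$ certifies stationarity of $\bar x$, and since such an accumulation point exists, $\bar x$ is stationary.
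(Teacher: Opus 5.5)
Your proof is correct. The boundedness argument (normalization, \cref{lem:vanishing_multipliers} for the complementarity part, contradiction with extended MFCQ) and the passage to the limit for the multipliers are the same as in the paper.

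The genuine difference is the feasibility step in the case $\upsilon_k\downarrow 0$. The paper reuses the boundedness of $\{(\mu^k,\lambda^k)\}_{k\in K}$ it has just established. By \cref{rem:update_of_multiplier}, $\upsilon_k\mu^k=\max\{0,\upsilon_k\hat\mu^k+g(x^k)\}$ and $\upsilon_k\lambda^k=\upsilon_k\hat\lambda^k+h(x^k)$. The left-hand sides tend to zero along $K$ because the multipliers are bounded and $\upsilon_k\to 0$. Also $\upsilon_k\hat\mu^k\to 0$ and $\upsilon_k\hat\lambda^k\to 0$ by \eqref{eq:elastic_safeguard_requirement_standard}. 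Hence $\max\{0,g(\bar x)\}=0$ and $h(\bar x)=0$.

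You instead invoke \cref{thm:approx_stat_feas_stat} and apply extended MFCQ a second time, to the pair $(\max\{0,g(\bar x)\},h(\bar x))$. This pair does satisfy \eqref{eq:singular_extended_multiplier}, since $\max\{0,g_i(\bar x)\}\min\{0,g_i(\bar x)\}=0$ for every $i$.

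What each route buys:
\begin{itemize}
\item The paper's version is a short consequence of the dual bound and the explicit update formula. It therefore carries over verbatim to \cref{thm:approx_stat_kkt_EQN} once boundedness is secured there, which is how the paper handles that theorem.
\item Your version decouples feasibility from the dual sequence entirely. It needs only extended MFCQ at $\bar x$ and stationarity of $\bar x$ for \eqref{eq:minimum_feasibility_problem}, which uses just boundedness of $\{\varepsilon_k\}$. It is thus a statement about any method that produces approximately stationary points of the feasibility problem. It would also work under extended strong quasinormality, taking the constant sequences $y^{i,k}=z^{j,k}=\bar x$, since $g_i(\bar x)>0$ whenever $\max\{0,g_i(\bar x)\}\neq 0$ and $h_j(\bar x)\,h_j(\bar x)>0$ whenever $h_j(\bar x)\neq 0$.
\end{itemize}
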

\begin{proof}
	As in the proof of \cref{thm:approx_stat_akkt}, we find $\nabla_x\Lag(x^k,\mu^k,\lambda^k)\to 0$.
	Assuming that $\{(\mu^k,\lambda^k)\}_{k\in K}$ is not bounded,
	we may suppose $\norm{\mu^k} + \norm{\lambda^k}\to_K+\infty$ without loss of generality. 
	Let us define $\{(\tilde\mu^k,\tilde\lambda^k)\}_{k\in K}\subseteq\R^m\times\R^\ell$ by
	\begin{equation}\label{eq:normalize_multiplier}
		\forall k\in K\colon\quad
		(\tilde\mu^k,\tilde\lambda^k)
		\coloneqq
		\frac{(\mu^k,\lambda^k)}{\norm{\mu^k} + \norm{\lambda^k}}.
	\end{equation}
	Along yet another subsequence (without relabeling), $\{(\tilde\mu^k,\tilde\lambda^k)\}_{k\in K}$
	converges to a nonvanishing tuple $(\tilde \mu,\tilde\lambda)\in\R^m\times\R^\ell$.
	By construction, we have $\tilde\mu\geq 0$, see \cref{rem:update_of_multiplier},
	and for each $i\in\{1,\ldots,m\}$ such that $g_i(\bar x)<0$,
	$\tilde\mu_i=0$ follows from \cref{lem:vanishing_multipliers}, i.e., $\tilde\mu^\top\min\{0,g(\bar x)\}=0$ is obtained.
	From $\nabla_x\Lag(x^k,\mu^k,\lambda^k)\to 0$ and $\norm{\mu^k} + \norm{\lambda^k}\to_K+\infty$ we find $g'(\bar x)^\top\tilde\mu + h'(\bar x)^\top\tilde\lambda=0$.
	Recalling that $(\tilde\mu,\tilde\lambda)$ does not vanish,
	extended MFCQ is violated at $\bar x$, contradicting the assumptions of the theorem.
	Hence, $\{(\mu^k,\lambda^k)\}_{k\in K}$ is bounded.
	
	Let $(\mu,\lambda)\in\R^m\times\R^\ell$ be an accumulation point of $\{(\mu^k,\lambda^k)\}_{k\in K}$,
	and let us assume without loss of generality that the convergences $\mu^k\to_K\mu$ and $\lambda^k\to_K\lambda$ hold.
	Clearly, $\nabla_x\Lag(\bar x,\lambda,\mu)=0$ follows from $\nabla_x\Lag(x^k,\mu^k,\lambda^k)\to 0$.
	Furthermore, similarly as above,
	$\mu\geq 0$ and $\mu^\top\min\{0,g(\bar x)\}=0$ are obtained from \cref{rem:update_of_multiplier}
	and \cref{lem:vanishing_multipliers}.
	It remains to prove that $\bar x$ is feasible.
	Whenever $\{\upsilon_k\}$ remains bounded away from zero,
	this is an immediate consequence of \cref{lem:_feas}.
	Hence, let us assume that $\upsilon_k\downarrow 0$.
	From \cref{rem:update_of_multiplier} and \cref{step:ALM_elastic_safeguard:y} of \cref{alg:ALM_elastic_safeguard}
	we find
	\begin{equation}\label{eq:helpful_representation_of_update}
		\upsilon_k\mu^k = \max\{0,\upsilon_k\hat \mu^k + g(x^k)\},
		\quad
		\upsilon_k\lambda^k = \upsilon_k\hat\lambda^k + h(x^k)
	\end{equation}
	for each $k\in K$.
	Due to boundedness of $\{(\mu^k,\lambda^k)\}_{k\in K}$,
	we have $\upsilon_k\mu^k\to_K 0 $ and $\upsilon_k\lambda^k\to_K 0$.
	Furthermore, the convergences $\upsilon_k\hat\mu^k\to 0$ and $\upsilon_k\hat\lambda^k\to 0$
	follow from \eqref{eq:elastic_safeguard_requirement_standard}.
	Thus, taking the limit $k\to_K+\infty$ in \eqref{eq:helpful_representation_of_update}
	while respecting continuity of $g$ and $h$,
	we end up with $0 = \max\{0,g(\bar x)\}$ and $0=h(\bar x)$,
	i.e., $\bar x$ is feasible for \eqref{eq:NLP}.
	Hence, $\bar x$ is a stationary point of \eqref{eq:NLP} with associated multiplier $(\mu,\lambda)$.
\end{proof}

Our second result is concerned with a primal accumulation point $\bar x\in\R^n$ 
of some primal-dual sequence $\{(x^k,\mu^k,\lambda^k)\}$ generated by \cref{alg:ALM_elastic_safeguard}
where extended strong quasinormality holds. In this situation, an additional assumption is needed 
to verify boundedness of the associated Lagrange multipliers $(\mu^k,\lambda^k)$.

\begin{assum}\label{ass:unbounded_multipliers_give_infeasibility_along_iterates}
	Let $\{(x^k,\mu^k,\lambda^k)\}$ be a primal-dual sequence generated by \cref{alg:ALM_elastic_safeguard}
	such that \eqref{eq:subproblem_stationarity} holds for each $k\in\N$.
	Let $K\subseteq\N$ be a given infinite index set such that $x^k\to_K\bar x$.
	For each infinite index set $L\subseteq K$, we assume that the following conditions hold.
	\begin{enumerate}
		\item For each $i\in\{1,\ldots,m\}$, whenever $\mu^k_i\to_L+\infty$, 
			then there is an infinite index set $L_i\subseteq K$ such that $g_i(x^k)>0$ holds for all $k\in L_i$.
		\item For each $j\in\{1,\ldots,\ell\}$, whenever $\lambda^k_j\to_L\pm\infty$, 
			then there is an infinite index set $L_j\subseteq L$ such that $\lambda^k_j\,h_j(x^k)>0$ holds for all $k\in L_j$.
	\end{enumerate}
\end{assum}

Now, we are in position to prove an adaptation of \cite[Thm~2.1]{BirginHaeserMaculanRamirez2025}
that applies to \cref{alg:ALM_elastic_safeguard}.

\begin{theorem}\label{thm:approx_stat_kkt_EQN}
	Let $\{\varepsilon_k\}$ be a null sequence, and let $\{(x^k,\mu^k,\lambda^k)\}_{k\in K}$ be a primal-dual sequence generated by \cref{alg:ALM_elastic_safeguard}
	such that \eqref{eq:subproblem_stationarity} holds for each $k\in\N$.
	Let $\bar x\in\R^n$ be an accumulation point of $\{x^k\}$ where extended strong quasinormality is valid.
	Let $K\subseteq\N$ be an infinite index set such that $x^k\to\bar x$
	and assume that \cref{ass:unbounded_multipliers_give_infeasibility_along_iterates}
	holds for this index set $K$.
	Then $\{(\mu^k,\lambda^k)\}_{k\in K}$ is bounded.
	Furthermore, $\bar x$ is stationary for \eqref{eq:NLP},
	and each accumulation point of $\{(\mu^k,\lambda^k)\}_{k\in K}$
	is a multiplier certificating stationarity of $\bar x$.
\end{theorem}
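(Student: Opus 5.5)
We argue along the lines of the proof of \cref{thm:approx_stat_kkt_EMFCQ}. The one change is in the contradiction step: violating extended strong quasinormality needs more than a nonvanishing singular multiplier. We also need sequences converging to $\bar x$ along which the nonzero components have the correct sign, and \cref{ass:unbounded_multipliers_give_infeasibility_along_iterates} supplies them.

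As in the proof of \cref{thm:approx_stat_akkt}, we have $\nabla_x\Lag(x^k,\mu^k,\lambda^k)=\varepsilon_k\xi^k\to 0$ and $\{\mu^k\}\subseteq\R^m_+$. Suppose $\{(\mu^k,\lambda^k)\}_{k\in K}$ is unbounded. Then there is an infinite $L\subseteq K$ with $\norm{\mu^k}+\norm{\lambda^k}\to_L+\infty$. We normalize as in \eqref{eq:normalize_multiplier} and pass to a further infinite subset, still called $L$, along which $(\tilde\mu^k,\tilde\lambda^k)\to_L(\tilde\mu,\tilde\lambda)\neq 0$. Exactly as before:
\begin{itemize}
\item $\tilde\mu\ge 0$;
\item $\tilde\mu^\top\min\{0,g(\bar x)\}=0$ by \cref{lem:vanishing_multipliers};
\item $g'(\bar x)^\top\tilde\mu+h'(\bar x)^\top\tilde\lambda=0$.
\end{itemize}
So $(\tilde\mu,\tilde\lambda)$ solves \eqref{eq:singular_extended_multiplier}.

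Next we build the sequences from \cref{def:extended_CQ}. Take $i$ with $\tilde\mu_i\neq 0$, so $\tilde\mu_i>0$. Then $\mu^k_i=\tilde\mu^k_i(\norm{\mu^k}+\norm{\lambda^k})\to_L+\infty$. By the first item of \cref{ass:unbounded_multipliers_give_infeasibility_along_iterates}, there is an infinite $L_i\subseteq K$ with $g_i(x^k)>0$ for $k\in L_i$. Setting $y^{i,k}$ to be the iterates $x^k$ along $L_i$, and using $x^k\to_K\bar x$, gives $y^{i,k}\to\bar x$ and $g_i(y^{i,k})>0$.

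Now take $j$ with $\tilde\lambda_j\neq 0$. Then $\lambda^k_j\to_L\sgn(\tilde\lambda_j)\infty$. The second item of the assumption gives an infinite $L_j\subseteq L$ with $\lambda^k_j h_j(x^k)>0$. For large $k\in L_j$ the sign of $\lambda^k_j$ equals that of $\tilde\lambda_j$, so $\tilde\lambda_j h_j(x^k)>0$. Setting $z^{j,k}$ to be these iterates gives the required sequence.

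With these sequences, $(\tilde\mu,\tilde\lambda)$ contradicts extended strong quasinormality at $\bar x$. Hence $\{(\mu^k,\lambda^k)\}_{k\in K}$ is bounded.

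The remaining claims follow word for word from the second part of the proof of \cref{thm:approx_stat_kkt_EMFCQ}. Let $(\mu,\lambda)$ be any accumulation point of the bounded sequence. Passing to the limit gives $\nabla_x\Lag(\bar x,\mu,\lambda)=0$, $\mu\ge 0$, and $\mu^\top\min\{0,g(\bar x)\}=0$. Feasibility of $\bar x$ splits into two cases:
\begin{itemize}
\item if $\{\upsilon_k\}$ stays bounded away from zero, it follows from \cref{lem:_feas};
\item if $\upsilon_k\downarrow 0$, boundedness gives $\upsilon_k(\mu^k,\lambda^k)\to_K 0$, and \eqref{eq:elastic_safeguard_requirement_standard} gives $\upsilon_k(\hat\mu^k,\hat\lambda^k)\to 0$. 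Passing to the limit in \eqref{eq:helpful_representation_of_update} then yields $\max\{0,g(\bar x)\}=0$ and $h(\bar x)=0$.
\end{itemize}
Feasibility together with the multiplier conditions makes $\bar x$ stationary with multiplier $(\mu,\lambda)$.

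The main obstacle is the sign and index bookkeeping in the construction of the sequences. For the $\lambda$ part, the sign of $\tilde\lambda_j$ has to be transferred to $\lambda^k_j$ for large $k$, and this is why the set $L_j$ must lie inside $L$. For the $\mu$ part, the assumption only gives $L_i\subseteq K$, which is enough because only $x^k\to_K\bar x$ is needed there. Every other step is inherited unchanged from the extended MFCQ case.
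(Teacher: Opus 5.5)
Your proposal is correct and follows the paper's proof essentially step for step. You normalize the unbounded multipliers as in the extended-MFCQ case, use the two items of the assumption to build the sequences $y^{i,k}$ and $z^{j,k}$ (with $L_j\subseteq L$ used to transfer the sign of $\tilde\lambda_j$), reach the contradiction with extended strong quasinormality, and then reuse the feasibility and stationarity argument from the extended-MFCQ theorem.
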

\begin{proof}
	As before, we have $\nabla_x\Lag(x^k,\mu^k,\lambda^k)\to 0$ by construction.
	Assuming that $\{(\mu^k,\lambda^k)\}$ is not bounded,
	we may proceed as in the proof of \cref{thm:approx_stat_kkt_EMFCQ}
	in order to find, via construction \eqref{eq:normalize_multiplier}, 
	a nonvanishing tuple $(\tilde\mu,\tilde\lambda)\in\R^m\times\R^\ell$
	such that $\tilde\mu\geq 0$, $\tilde\mu^\top\min\{0,g(\bar x)\}=0$, 
	and $g'(\bar x)^\top\tilde\mu + h'(\bar x)^\top\tilde\lambda=0$.
	Throughout, let $L\subseteq K$ be an infinite index set 
	such that $\norm{\mu^k} + \norm{\lambda^k}\to_L+\infty$
	and $(\tilde\mu^k,\tilde\lambda^k)\to_L(\tilde\mu,\tilde\lambda)$.
	
	For each $i\in\{1,\ldots,m\}$ such that $\tilde\mu_i\neq 0$,
	we know $\mu^k_i\to_L +\infty$.
	Hence, \cref{ass:unbounded_multipliers_give_infeasibility_along_iterates}
	equips us with an infinite index set $L_i\subseteq K$
	such that $g_i(x^k)>0$ holds for all $k\in L_i$.
	We define $y^{i,k} \coloneqq x^k$ for each $k\in L_i$ and observe that $y^{i,k}\to_{L_i}\bar x$.
	
	For each $j\in\{1,\ldots,\ell\}$ such that $\tilde\lambda_j\neq 0$,
	we know $\lambda^k_j\to_L\pm\infty$,
	and $\sgn(\lambda^k_j)=\sgn(\tilde\lambda_j)$ holds for all large enough $k\in L$.
	Hence, \cref{ass:unbounded_multipliers_give_infeasibility_along_iterates}
	equips us with an infinite index set $L_j\subseteq L$
	such that $\lambda^k_j h_j(x^k)>0$ holds for all $k\in L_j$.
	Hence, for large enough $k\in L_j$, we have $\tilde\lambda_j h_j(x^k)>0$.
	We define $z^{j,k} \coloneqq x^k$ for each $k\in L_j$ and observe that $z^{j,k}\to_{L_j}\bar x$.
	
	By construction, extended strong quasinormality is violated at $\bar x$, contradicting the assumptions
	of the theorem. Hence, $\{(\mu^k,\lambda^k)\}_{k\in K}$ is bounded.
	
	The remainder of the proof parallels the one of \cref{thm:approx_stat_kkt_EMFCQ}.
\end{proof}

\begin{remark}\label{rem:regaring_strong_quasinormality}
	Inspecting the proof of \cref{thm:approx_stat_kkt_EQN},
	it is apparent that, for $j\in\{1,\ldots,\ell\}$ such that $\tilde\lambda_j\neq 0$
	(i.e., $\lambda^k_j\to\pm\infty$),
	\cref{ass:unbounded_multipliers_give_infeasibility_along_iterates} requires
	$L_j\subseteq L$, and not merely $L_j\subseteq K$ as might be expected from the treatment of the inequality constraints.
	This stronger condition is needed to ensure validity of the sign conditions required in the definition of extended strong quasinormality.
\end{remark}

The upcoming example 
presents an instance of \cref{alg:ALM_elastic_safeguard} 
where \cref{ass:unbounded_multipliers_give_infeasibility_along_iterates}
holds for $K = \N$, see \cite[Thm~3.2]{BirginHaeserMaculanRamirez2025} as well,
i.e., it applies if the produced primal sequence converges as a whole.

\begin{example}\label{ex:kkt_via_EQN}
	Let us assume that the safeguarding sets $\safeYY^g$ and $\safeYY^h$ are given as in \eqref{eq:natural_choice_safeYY},
	and pick $\hat\mu^k\in\rho_k\safeYY^g$ and $\hat\lambda^k\in\rho_k\safeYY^h$ in \cref{step:ALM_elastic_safeguard:ysafe} such that
	\begin{equation}\label{eq:projection_rule_safeguarding}
		\begin{aligned}
			\hat \mu^k & = \max\{0,\min\{\mu^{k-1},\rho_k\,\mu_{\max}\}\},
			\\
			\hat \lambda^k & = \max\{-\bar\rho_k\,\lambda_{\min},\min\{\lambda^{k-1},\bar\rho_k\,\lambda_{\max}\}\},
		\end{aligned}
	\end{equation}
	where $\bar\rho_k\coloneqq\min\{\rho_k,\bar\rho\}$ is used for some given $\bar\rho>0$.
	Note that this implies that $\{\hat\lambda^k\}\subseteq\bar\rho\safeYY^h$,
	i.e., equality constraints are effectively treated with a rigid safeguard.
	
	Let $\{(x^k,\mu^k,\lambda^k)\}$ be a primal-dual sequence generated by \cref{alg:ALM_elastic_safeguard}, 
	and assume that $x^k\to\bar x$ holds for some $\bar x\in\R^n$.
	We show that \cref{ass:unbounded_multipliers_give_infeasibility_along_iterates}
	holds for $K=\N$.
	Therefore, we pick an arbitrary infinite index set $L\subseteq\N$.
	
	Pick $i\in\{1,\ldots,m\}$ such that $\mu^k_i\to_L+\infty$.
	Observe that $\mu^k_i=\hat \mu^k_i + g_i(x^k)/\upsilon_k$ holds for each large enough $k\in L$,
	see \cref{rem:update_of_multiplier}.
	From $\mu^k_i\to_{L}+\infty$ there must exist an infinite index set $L_i\subseteq\N$
	such that $\mu^k_i>\mu^{k-1}_i$ holds for all $k\in L_i$ 
	as, otherwise, $\mu^k_i\leq\mu^{k-1}_i$ holds for all large enough
	$k\in\N$, yielding boundedness of $\{\mu^k_i\}$ and, thus, a contradiction.
	Thus, for each $k\in L_i$, 
	we have $\mu^{k-1}_i < \mu^k_i  =  \hat\mu^k_i + g_i(x^k)/\upsilon_k \leq \mu^{k-1}_i + g_i(x^k)/\upsilon_k$,
	i.e., $g_i(x^k)>0$ due to $\upsilon_k>0$.

	Pick $j\in\{1,\ldots,\ell\}$ such that $\lambda^k_j\to_L\pm \infty$.
	As we have $\lambda^k_j = \hat\lambda^k_j + h_j(x^k)/\upsilon_k$ for all $k\in L$
	while $\{\hat\lambda^k_j\}_{k\in L}$ remains bounded,
	we find $|h_j(x^k)/\upsilon_k|\to_L+\infty$ and, due to $\upsilon_k>0$, $\sgn(\lambda^k_j)=\sgn(h_j(x^k))$
	for all large enough $k\in L$.
	Hence, there is an infinite index set $L_j\subseteq L$, a suitable tail of $L$, exemplary,
	such that $\lambda^k_j h_j(x^k)>0$ must be valid for all $k\in L_j$.
	
	Taking everything together, 
	we see that \cref{ass:unbounded_multipliers_give_infeasibility_along_iterates}
	holds for $K = \N$.
\end{example}

We end this section with two remarks.

\begin{remark}
	Inspecting the arguments in \cref{ex:kkt_via_EQN} that addressed the inequality constraints,
	it appears that, for $i\in\{1,\ldots,m\}$, we only need to guarantee $\hat\mu^k_i\leq\mu^{k-1}_i$
	in \cref{step:ALM_elastic_safeguard:ysafe} of \cref{alg:ALM_elastic_safeguard}, 
	which trivially holds for selection rules of type
	\[
	\forall i\in\{1,\ldots,m\}\colon\quad
	\hat \mu^k_i
	\begin{cases}
		=\mu^{k-1}_i	&	\text{if }0\leq\mu^{k-1}_i\leq\rho_k\,\mu_{\max,i},
		\\
		\in[0,\rho_k\,\mu_{\max,i}]	& \text{otherwise,}
	\end{cases}
	\]
	due to $\{\mu^k\}\subseteq\R^m_+$.
	Exemplary, one may choose $\hat\mu^k_i = 0$ if $\mu^{k-1}_i>\rho_k\,\mu_{\max,i}$, as suggested in \cite[\S 5.3]{birgin2014practical}.
\end{remark}

\begin{remark}\label{rem:}
	For equality constraints, instead, the argument in \cref{ex:kkt_via_EQN} seems to fail when projecting onto $\rho_k\safeYY^h$ (rather than $\bar{\rho}_k\safeYY^h$).
	It remains an open question whether the result holds or not when adopting the update rule 
	$\hat \lambda^k = \max\{-\rho_k\,\lambda_{\min},\min\{\lambda^{k-1},\rho_k\,\lambda_{\max}\}\}$ for the multipliers of equality constraints.
	Regardless, since equality constraints were not considered in \cite{BirginHaeserMaculanRamirez2025}, \cref{thm:approx_stat_kkt_EQN} provides a more general result.
\end{remark}

\section{Complexity analysis}\label{sec:complexity}

This section is devoted to the derivation of worst-case complexity results related to \cref{alg:ALM_elastic_safeguard},
which remain analogous to those valid for \cref{alg:ALM_rigid_safeguard}.
A complexity result for the case where $\{\upsilon_k\}$ remains bounded away from zero directly follows from \cite[Thm 3.1]{birgin2020complexity} under mild assumptions.
In general, though, \cref{alg:ALM_elastic_safeguard} may stop at an iterate $x^k$ that appears to be infeasible and, at the same time, a local minimizer of the feasibility measure.
Note that the possibility $\upsilon_k \downarrow 0$ must be considered since it necessarily takes place, for example, when the feasible region of \eqref{eq:NLP} is empty.

The following result establishes that, before the number of iterations given by \eqref{eq:complexity_outer} is reached, we necessarily find an approximate stationary point (up to preliminarily chosen tolerances) for \eqref{eq:NLP}
or we find an infeasible point that is an approximate stationary point (up to preliminarily chosen tolerances) for the feasibility problem \eqref{eq:minimum_feasibility_problem}.

To prove the result, we rely on some boundedness assumptions.
In particular, we assume that there exist constants $\cf>0$, $\cg>0$, $\ch>0$, and $\cbig>0$ such that
\begin{subequations}\label{eq:boundedness_ass}
	\begin{align}
		\norm{\nabla f(x^k)} \leq{}& \cf ,\label{cf} \\
		\norm{g^\prime(x^k)} \leq{}& \cg ,\quad
		\norm{h^\prime(x^k)} \leq{} \ch ,\label{cgh} \\
		V_k \leq{}& \cbig \label{cbig}
	\end{align}
\end{subequations}
hold along all iterations $k\in\N$ of \cref{alg:ALM_elastic_safeguard}.
Conditions \eqref{cf} and \eqref{cgh} are analogous to the assertions of \cite[Lem.~3.3]{birgin2020complexity}, 
that result from the continuity of the functions $\nabla f$, $g^\prime$, and $h^\prime$, and the assumed boundedness of $x$, $\hat{\mu}$, and $\hat{\lambda}$
therein.
Condition \eqref{cbig} was adopted also in \cite[Thm 3.1]{birgin2020complexity} for the case where $\{\upsilon_k\}$ remains bounded away from zero.
Since we have
\[
    V_k 
    = 
    \max\{ \|g(x^k)-z^k\|,\|h(x^k)\| \} 
    = 
    \max\{ \|\max\{g(x^k),-\upsilon_k\hat{\mu}^k\}\|, \|h(x^k)\| \}
\]
while \eqref{eq:elastic_safeguard_requirement_standard} holds, 
\eqref{cbig} boils down to bounded constraint violation along the iterates.
A sufficient condition to satisfy \eqref{eq:boundedness_ass} is that the iterates $\{x^k\}$ remain in a compact set $\Omega\subseteq \R^n$, for instance, by imposing lower-level box constraints of type $x\in [x_l,x_u]$
which are not augmented, see \cref{sec:AL_foundations}.

For notational convenience, let us introduce the constant $\cdiam \coloneqq \cg \extent(\safeYY^g) + \ch \extent(\safeYY^h) > 0$.

\begin{theorem} \label{thm:complexity_outer}
	Let $\epsilon,\delta > 0$ and $\deltalow \in (0,\delta]$ be given,
	let $\{\varepsilon_k\}$ be a null sequence,
	and let $\{(x^k,\mu^k,\lambda^k)\}$ be a primal-dual sequence generated by \cref{alg:ALM_elastic_safeguard}
	such that \eqref{eq:subproblem_stationarity} holds for each $k\in\N$.
	Assume that $\Ninnertol(\epsilon, \deltalow)$ is such that
	$\varepsilon_k \leq \min\{\epsilon, \deltalow/4 \}$ for all $k \geq
	\Ninnertol(\epsilon, \deltalow)$.
	Suppose conditions \eqref{eq:boundedness_ass} hold for all $k\in\N$.
	Then, after at most
	\begin{equation} \label{eq:complexity_outer}
		\Niterout(\epsilon,\delta,\deltalow)
		\coloneqq
		\ceil{ \max\left\{ \Ninnertol(\epsilon, \deltalow), \frac{\log(\delta/\cbig)}{\log(\tau)} \Npenupdates(\delta, \deltalow) \right\} }
	\end{equation}      
	iterations,
	where
	\begin{equation} \label{eq:complexity_outer_updates}
		\Npenupdates(\delta, \deltalow)
		\coloneqq
		\ceil{ \max\left\{
			\frac{\log\left(\frac{\deltalow}{2\cdiam}\right)}{\log\eta},
			\frac{\log\left(\frac{\min\{ 1, \deltalow/(4\cf) \}}{\upsilon_0}\right)}{\log\gamma},
			\frac{\log\left(\frac{\delta}{\extent(\safeYY^g) \upsilon_0\varrho_0}\right)}{\log\eta}
		\right\} }
		,
	\end{equation}
	we obtain an iteration $k$ such that one of the following
	mutually exclusive situations is on hand.
	\begin{enumerate}[label=(\roman{*})]
		\item The multipliers $\mu^k \in \R^m_+$ and $\lambda^k \in \R^\ell$ are such that
		\begin{subequations}\label{eq:primaldual}
			\begin{align}
				\norm{\nabla f(x^k) +  g'(x^k)^\top \mu^k +  h'(x^k)^\top \lambda^k}
				\leq{}&
				\epsilon,
				\label{primaldual1}
				\\
				\norm{\max\{0,g(x^k)\}}_\infty\leq \delta,\quad \norm{h(x^k)}_\infty \leq{}& \delta,
				\label{primaldual2}
				\\
				g_i(x^k) <  - \delta \implies \mu^k_i ={}& 0 
				\quad\forall i\in\{1,\ldots,m\}.
				\label{primaldual3}
			\end{align}
		\end{subequations}
		%%%
		\item The iterate $x^k$ verifies
		\begin{subequations}\label{paradamala}
		\begin{align}
			\norm{\nabla \feasmeas(x^k)}
			\leq{}& \deltalow ,\label{paradamala1} \\
			\max\{\norm{\max\{0,g(x^k)\}}_\infty, \norm{h(x^k)}_\infty \} >{}& \delta , \label{paradamala2}
		\end{align}
		\end{subequations}
        where $\feasmeas$ is the feasibility measure defined in \eqref{eq:minimum_feasibility_problem}.
	\end{enumerate}
\end{theorem}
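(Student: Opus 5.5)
Our approach mirrors \cite[Thm~3.1]{birgin2020complexity}: we count outer iterations by separating those at which the penalty parameter is kept from those at which it is decreased. We only consider $k\geq\Ninnertol(\epsilon,\deltalow)$, so that $\varepsilon_k\leq\min\{\epsilon,\deltalow/4\}$. By \cref{rem:update_of_multiplier} and \cref{step:ALM_elastic_safeguard:y}, every iterate satisfies $\mu^k\geq 0$ and $\nabla_x\Lag(x^k,\mu^k,\lambda^k)=\nabla_x\AugLag_{\upsilon_k}(x^k,\hat\mu^k,\hat\lambda^k)$. Hence \eqref{primaldual1} holds automatically for such $k$.

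\textbf{Step 1: small $V_k$ gives (i).} We show that $V_k\leq\delta$ together with $\upsilon_k\varrho_k\extent(\safeYY^g)\leq\delta$ yields (i). Since $V_k=\max\{\norm{\max\{g(x^k),-\upsilon_k\hat\mu^k\}},\norm{h(x^k)}\}$, the bound $V_k\leq\delta$ gives \eqref{primaldual2} directly.

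For \eqref{primaldual3}, suppose $g_i(x^k)<-\delta$. Then $\upsilon_k\hat\mu^k_i\leq\upsilon_k\varrho_k\extent(\safeYY^g)\leq\delta$, so $g_i(x^k)+\upsilon_k\hat\mu^k_i<0$. By \cref{rem:update_of_multiplier}, $\mu^k_i=0$. This is where the third term of $\Npenupdates$ enters: after $p$ decreases of the penalty parameter we have $\upsilon_k\varrho_k=\eta^p\upsilon_0\varrho_0$.

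\textbf{Step 2: many penalty decreases give (i) or (ii).} Let $p$ be the number of penalty decreases up to iteration $k$, so $\upsilon_k=\gamma^p\upsilon_0$ and $\upsilon_k\varrho_k=\eta^p\upsilon_0\varrho_0=\eta^p\upsilon_0$. Suppose $p\geq\Npenupdates(\delta,\deltalow)$ and $x^k$ is not of type (i). Multiply \eqref{eq:approx_stat} by $\upsilon_k$. Using $\nabla\feasmeas(x)=g'(x)^\top\max\{0,g(x)\}+h'(x)^\top h(x)$, we compare with $\upsilon_k\nabla_x\AugLag_{\upsilon_k}$. The difference is bounded by
\[
\upsilon_k\cf+\cg\norm{\upsilon_k\hat\mu^k}+\ch\norm{\upsilon_k\hat\lambda^k}+\upsilon_k\varepsilon_k ,
\]
where we use that $\max\{0,\cdot\}$ is nonexpansive.

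Each term is controlled by one part of $\Npenupdates$. The second log term gives $\upsilon_k\leq\min\{1,\deltalow/(4\cf)\}$, so $\upsilon_k\cf\leq\deltalow/4$ and $\upsilon_k\varepsilon_k\leq\deltalow/4$. The first log term gives $\upsilon_k\varrho_k\cdiam\leq\eta^p\cdiam\leq\deltalow/2$; here we use $\upsilon_0\varrho_0\leq 1$ or absorb the factor $\upsilon_0$, and this bookkeeping is the point to verify. Summing, $\norm{\nabla\feasmeas(x^k)}\leq\deltalow$.

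If additionally the infeasibility exceeds $\delta$, we are in (ii). Otherwise, \eqref{primaldual2} holds, and \eqref{primaldual3} follows from Step 1 because $\upsilon_k\varrho_k\extent(\safeYY^g)\leq\delta$. So we are in (i). The two cases are mutually exclusive since \eqref{primaldual2} and \eqref{paradamala2} contradict each other.

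\textbf{Step 3: counting iterations.} Between two consecutive penalty decreases, every iteration satisfies $V_k\leq\tau V_{k-1}$. Starting from $V\leq\cbig$ by \eqref{cbig}, after at most $\log(\delta/\cbig)/\log\tau$ consecutive such iterations we reach $V_k\leq\delta$. What remains is the requirement $\upsilon_k\varrho_k\extent(\safeYY^g)\leq\delta$ from Step 1.

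\textbf{Main obstacle.} This requirement need not hold during the early stretches, which is why the bound is multiplicative. We will argue that within each of the first $\Npenupdates$ blocks of consecutive non-decrease iterations, either $V_k\leq\delta$ is reached with the product condition already satisfied (giving (i)), or the block ends with a penalty decrease within $\log(\delta/\cbig)/\log\tau$ steps. Once $\Npenupdates$ decreases have occurred, Step 2 applies at the current iterate. In every case, termination happens within $\Niterout$ iterations.

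The delicate part is precisely this case split, in particular the situation where $V_k\leq\delta$ but $\upsilon_k\varrho_k$ is still large. We plan to handle it by noting that $\upsilon_k\varrho_k\leq\upsilon_0\varrho_0\eta^{p}$ is monotone, so such blocks are charged to the decrease count.
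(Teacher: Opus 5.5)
There is a genuine gap, and it sits in your Step~1 and the ``main obstacle.'' Steps~2 and~3 themselves match the paper's second and third steps. You assert that (i) follows from $V_k\le\delta$ only when, in addition, $\upsilon_k\varrho_k\extent(\safeYY^g)\le\delta$. You are then left with blocks in which $V_k\le\delta$ while the product is still large, and you plan to charge these to the decrease count. That cannot work. If $V_k\le\tau V_{k-1}$ holds at every iteration from some point on, no further penalty decrease ever happens. Then $p$, and hence $\upsilon_k\varrho_k=\eta^p\upsilon_0$, is frozen, your Step~1 never fires, Step~2 is never reached, and nothing bounds the iteration count. Your dichotomy (``either $V_k\le\delta$ with the product condition, or a decrease within $\log(\delta/\cbig)/\log\tau$ steps'') omits exactly this third possibility.

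The missing observation is that $V_k\le\delta$ alone already yields \eqref{primaldual3}; this is the content of \cite[Lem.~3.1]{birgin2020complexity} that the paper invokes. For each $i$ you have $\max\{g_i(x^k),-\upsilon_k\hat\mu^k_i\}\ge -V_k\ge-\delta$. So if $g_i(x^k)<-\delta$, the maximum must come from the second entry. That means $\upsilon_k\hat\mu^k_i\le\delta<-g_i(x^k)$, hence $g_i(x^k)+\upsilon_k\hat\mu^k_i<0$ and $\mu^k_i=0$ by \cref{rem:update_of_multiplier}. With this, the delicate case disappears. For $k\ge\Ninnertol(\epsilon,\deltalow)$, either $V_k\le\delta$ holds, giving (i), or a decrease occurs within $\log(\delta/\cbig)/\log\tau$ iterations.

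The third term of $\Npenupdates(\delta,\deltalow)$ is needed only in the branch where \eqref{paradamala2} fails after enough decreases. There you know only \eqref{primaldual2}, which does not imply $V_k\le\delta$: $V_k$ uses Euclidean norms and involves $-\upsilon_k\hat\mu^k$. So the product bound is what delivers \eqref{primaldual3}, exactly as you argue in Step~2.

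Your worry about the factor $\upsilon_0$ is legitimate, but it is not the real issue. Since $\varrho_0=1$, one gets $\norm{\upsilon_k\hat\mu^k}\le\eta^{u(k)}\upsilon_0\extent(\safeYY^g)$. The paper's bound without $\upsilon_0$ tacitly assumes $\upsilon_0\le 1$. Inserting $\upsilon_0\varrho_0$ into the first term of $\Npenupdates(\delta,\deltalow)$, as is done in the third term, repairs it.
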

\begin{proof}
	We proceed in three steps, considering only iterations $k \geq \Ninnertol(\epsilon, \deltalow)$.
	First, it is shown that either \eqref{eq:primaldual} is satisfied or the number of iterations between two consecutive penalty updates (at \cref{step:ALM_elastic_safeguard:change_penalty_param}) is bounded above by $\log(\delta/\cbig) / \log(\tau)$.
	Second, we prove that a number $\Npenupdates(\delta, \deltalow)$ of penalty updates guarantees that \eqref{paradamala1} is satisfied.
	Third, we demonstrate also that either \eqref{paradamala2} or \eqref{primaldual2}--\eqref{primaldual3} must hold after $\Npenupdates(\delta, \deltalow)$ penalty updates.
	
	\medskip
	
	$\triangleright$
	After the first $\Ninnertol(\epsilon, \deltalow)$ iterations, \eqref{eq:subproblem_stationarity} and $\varepsilon_k\leq\epsilon$ guarantee that \eqref{primaldual1} is satisfied.
	As shown in \cite[Lem.~3.1]{birgin2020complexity}, 
    condition $V_k \leq \delta$ implies \eqref{primaldual2} and \eqref{primaldual3}.
	Owing to \eqref{cbig}, the condition at \cref{step:ALM_elastic_safeguard:check_V} indicates that it takes at most
	$\log(\delta/\cbig) / \log(\tau)$ consecutive iterations without penalty updates to reach $V_k \leq \delta$.
	This means that, after the first $\Ninnertol(\epsilon, \deltalow)$ iterations, either conditions \eqref{eq:primaldual} are satisfied or the difference between iterations with a penalty decrease is at most $\log(\delta/\cbig) / \log(\tau)$.
	
	\medskip	
	
	$\triangleright$
	We now show that, after the first $\Ninnertol(\epsilon, \deltalow)$ iterations, there can be at most $\Npenupdates(\delta, \deltalow)$ penalty updates before satisfying \eqref{paradamala1}.
	Let $u(k)\in\N$ denote the number of penalty updates up to the $k$-th iteration.
	The update rule at \cref{step:ALM_elastic_safeguard:change_penalty_param} 
    gives $\upsilon_k = \gamma^{u(k)}\upsilon_0$ and $\upsilon_k\varrho_k  = \eta^{u(k)} \upsilon_0\varrho_0 $.
    Then $u(k) \geq \Npenupdates(\delta, \deltalow)$ implies
	\begin{subequations}\label{eq:thresholds}
		\begin{align}
			u(k) \geq{}& \log\left(\deltalow/(2\cdiam)\right) / \log\eta , \label{ellk} \\
			\upsilon_k \leq{}& \min\{ 1, \deltalow/(4\cf) \} , \label{rho_uno_cf} \\
			\upsilon_k\varrho_k  \leq{}& \delta / \extent(\safeYY^g) , \label{rhomuma}
		\end{align}
	\end{subequations}
	where condition \eqref{ellk} follows from the first term in the $\max$ operator within \eqref{eq:complexity_outer_updates}, 
	\eqref{rho_uno_cf} from the second, and \eqref{rhomuma} from the third.
	We now proceed by showing that $k\geq \Ninnertol(\epsilon, \deltalow)$ and \eqref{eq:thresholds} yield \eqref{paradamala1}.
	
	For notational convenience, let us define, for each $k\in\N$, the perturbed feasibility measure
	\begin{equation*}
		\forall x\in\R^n\colon\quad
		\pertfeasmeas_k(x)
		\coloneqq
		\frac{1}{2} \norm{\max\{0,g(x) + \upsilon_k \hat{\mu}^k\}}^2 + \frac{1}{2} \norm{h(x) + \upsilon_k \hat{\lambda}^k}^2 .
	\end{equation*}
	Then, by \eqref{eq:subproblem_stationarity}, for all $k \geq 1$, we have that
	\[
	\varepsilon_k
	\geq
	\norm{\nabla_x \AugLag_{\upsilon_k}(x^k,\hat{\mu}^k,\hat{\lambda}^k) }
	=
	\left\|
	\nabla f(x^k) + \frac{1}{\upsilon_k} \nabla \pertfeasmeas_k(x^k)
	\right\|
	\geq
	\norm{
	\upsilon_k \nabla f(x^k) + \nabla \pertfeasmeas_k(x^k)
	}
	,
	\]
	where the last inequality follows from \eqref{rho_uno_cf} (particularly, $\upsilon_k \leq 1$), and then
	\begin{equation} \label{quiero1}
		\norm{
		\nabla \pertfeasmeas_k(x^k)
		}
		\leq
		\varepsilon_k + \cf \upsilon_k
		\leq
		\deltalow/4 + \cf \upsilon_k
		\leq
		\deltalow/2
		,
	\end{equation} 
	where the first inequality is due to \eqref{cf},
	the second to the definition of $\Ninnertol(\epsilon, \deltalow)$ and $k\geq \Ninnertol(\epsilon, \deltalow)$,
	and the third to \eqref{rho_uno_cf} (particularly, $\cf \upsilon_k \leq \deltalow/4$).
	Now we bound the difference between the gradient of $\pertfeasmeas_k$ and that of $\feasmeas$, obtaining
	\begin{align*}
		\norm{\nabla \pertfeasmeas_k(x^k)
		- \nabla \feasmeas(x^k)}
		={}&
		\norm{
		g^\prime(x^k)^\top \max\{0,g(x^k)+\upsilon_k\hat{\mu}^k\}
		+ h^\prime(x^k)^\top \upsilon_k\hat{\lambda}^k
		- g^\prime(x^k)^\top \max\{0,g(x^k)\}
		}
		\\
		\leq{}&
		\norm{g^\prime(x^k)}
		\norm{
		\max\{0,g(x^k)+\upsilon_k\hat{\mu}^k\} - \max\{0,g(x^k)\}
		}
		+ \norm{h^\prime(x^k)}\norm{ \upsilon_k\hat{\lambda}^k }
		\\
		\leq{}&
		\cg
		\norm{
		\upsilon_k\hat{\mu}^k
		}
		+
		\ch \norm{
		\upsilon_k\hat{\lambda}^k
		}
		,
	\end{align*}
	where the last estimate follows from the Lipschitzness of $s\mapsto\max\{0,s\}$ 
    with Lipschitz constant 1 and \eqref{cgh}.
	Therefore, from \eqref{quiero1} we obtain that
	\begin{equation*}
		\norm{\nabla \feasmeas(x^k)}
		\leq
		\frac{\deltalow}{2}
		+ \cg \|
		\upsilon_k\hat{\mu}^k
		\|
		+ \ch
		\|
		\upsilon_k\hat{\lambda}^k
		\|
	\end{equation*}
	for each $k$.
	By counting the number $u(k)$ of penalty updates, \cref{step:ALM_elastic_safeguard:change_penalty_param} gives
	\[
	\norm{ \upsilon_k\hat{\mu}^k }\leq \eta^{u(k)} \extent(\safeYY^g)
	,\quad
	\norm{\upsilon_k\hat{\lambda}^k } \leq \eta^{u(k)} \extent(\safeYY^h)
	,
	\]
	for each $k$, hence
	\begin{equation*}
		\norm{\nabla \feasmeas(x^k)}
		\leq
		\frac{\deltalow}{2} + \left[ \cg \extent(\safeYY^g) + \ch \extent(\safeYY^h) \right] \eta^{u(k)}
		=
		\frac{\deltalow}{2} + \cdiam \eta^{u(k)} .
	\end{equation*}
	Since \eqref{ellk} implies $\eta^{u(k)} \leq \deltalow/(2 \cdiam)$, it leads to \eqref{paradamala1}.
	
	\medskip
	
	$\triangleright$
	Finally, we prove that, after $\Npenupdates(\delta, \deltalow)$ penalty updates, if \eqref{paradamala2} does not hold, then \eqref{primaldual2} and \eqref{primaldual3} are valid.
	If \eqref{paradamala2} fails, \eqref{primaldual2} clearly holds.
	Therefore, $g_i(x^k) \leq \delta$ for all $i\in\{1,\ldots,m\}$.
	Now, if $g_i(x^k) < -\delta$, we can exploit \eqref{rhomuma} to find
    \[
        g_i(x^k) + \upsilon_k \hat{\mu}^k_i
        < 
        - \delta + \upsilon_k \hat{\mu}^k_i 
        \leq 
        - \delta + \upsilon_k \varrho_k \extent(\safeYY^g)
        <
        0, 
    \]
    so $\mu_i^k = 0$ follows from \cref{rem:update_of_multiplier}.
	Therefore, \eqref{primaldual3} is valid, too.
	
	\medskip
	
	Combining these observations, 
	we have shown that before the number of iterations given by \eqref{eq:complexity_outer} is reached --- and $\Npenupdates(\delta, \deltalow)$ penalty updates are done --- either \eqref{eq:primaldual} or \eqref{paradamala} is satisfied.
	This concludes the proof.
\end{proof}
	
We now derive a bound on the total number of iterations and function evaluations
which are required to find an iteration $k$ of \cref{alg:ALM_elastic_safeguard} such that the qualitative properties discussed in \cref{thm:complexity_outer} hold
for the primal-dual triplet $(x^k,\mu^k,\lambda^k)$. 
Therefore, we combine estimates for outer and inner iteration numbers (and function evaluations).
Estimating the outer complexity is possible via \cref{thm:complexity_outer}.
To address the inner complexity, we rely on \cite[Thm~4.3]{birgin2020complexity}, which requires that
\begin{itemize}
	\item
		the iterates $\{x^k\}$ remain in a bounded set $\Omega \subseteq \R^n$, and
	\item 
		for fixed penalty parameter and multiplier estimates, the augmented Lagrangian function has a Lipschitz continuous gradient over $\Omega$.
\end{itemize}
Then, adopting a gradient-based method for solving \eqref{eq:subproblem}, 
there exists a constant $\citerin > 0$ such that the number of iterations and function evaluations necessary to satisfy \eqref{eq:subproblem_stationarity} 
is bounded above by
\begin{equation}\label{eq:complexity_inner}
	\Niterin(\upsilon_k, \varepsilon_k) = \citerin \upsilon_k^{-2} \varepsilon_k^{-2}.
\end{equation}
Departing from \cite{birgin2020complexity}, where the safeguarding mechanism is rigid, the multiplier estimates are not necessarily bounded in \cref{alg:ALM_elastic_safeguard}.
Nevertheless, the safeguarding condition \eqref{eq:elastic_safeguard_requirement_standard} allows us to use the inner complexity bound \eqref{eq:complexity_inner} from \cite[Thm~4.3]{birgin2020complexity}.
In fact, since $\|\hat{\mu}^k\|$ and $\|\hat{\lambda}^k\|$ become smaller than $1/\upsilon_k$ as $\upsilon_k\downarrow 0$, 
the Lipschitz constant of $\nabla_x \AugLag_{\upsilon_k}(\cdot,\hat{\mu}^k,\hat{\lambda}^k)$ is asymptotically dominated by $1/\upsilon_k$.

The following result is inspired by \cite[Thm~3.6]{birgin2020complexity} and provides a total complexity bound for \cref{alg:ALM_elastic_safeguard} under analogous Lipschitz assumptions.
Moreover, to obtain a concrete bound, we specify the sequence of inner tolerances $\{\varepsilon_k\}$.

\begin{theorem}\label{thm:total_count_iterations}
	Consider the assumptions of \cref{thm:complexity_outer} and suppose, additionally, that
	\begin{itemize}
		\item there exists $\citerin>0$ such that establishing \eqref{eq:subproblem_stationarity} in \cref{step:ALM_elastic_safeguard:subproblem} of \cref{alg:ALM_elastic_safeguard} 
			requires a number of inner iterations (and function evaluations) that is at most $\Niterin(\upsilon_k, \varepsilon_k)$ given in \eqref{eq:complexity_inner},
		\item for some $\varepsilon_0 > 0$ and $\gamma_\varepsilon \in (0,1)$, $\varepsilon_k = \max\{ \min\{\epsilon, \deltalow/4 \}, \gamma_\varepsilon^k \varepsilon_0 \}$ holds for each $k\in\N$.
	\end{itemize}
	Let $\Niterout \coloneqq \Niterout(\epsilon, \delta, \deltalow)$ be the number of outer iterations defined by \eqref{eq:complexity_outer} and set $\theta \coloneqq (\gamma\gamma_\varepsilon)^{-2}$.
	Then $\Ninnertol(\epsilon, \deltalow) = \ceil{ \log(\min\{\epsilon, \deltalow/4 \}/\varepsilon_0) / \log(\gamma_\varepsilon) }$, 
	and an upper bound on the total number of inner iterations (and function evaluations) required to obtain an outer iteration $k$
	such that the qualitative properties claimed in \cref{thm:complexity_outer} hold is given by
	\begin{equation*}
		\Nitertot(\epsilon, \delta, \deltalow)
		\coloneqq
		\ceil{ \frac{\citerin}{(\upsilon_0 \varepsilon_0)^2} \frac{\theta^{\Niterout + 1}-1}{\theta-1} } .
	\end{equation*}
\end{theorem}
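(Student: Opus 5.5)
The plan has three parts: determine $\Ninnertol$ for the specified tolerances, bound the inner cost of each outer iteration by a geometric term, and sum these terms up to the outer iteration count $\Niterout$ from \cref{thm:complexity_outer}.

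\textbf{Step 1: computing $\Ninnertol$.} With $\varepsilon_k = \max\{\min\{\epsilon,\deltalow/4\},\gamma_\varepsilon^k\varepsilon_0\}$ we have $\varepsilon_k\le\min\{\epsilon,\deltalow/4\}$ exactly when $\gamma_\varepsilon^k\varepsilon_0\le\min\{\epsilon,\deltalow/4\}$. Taking logarithms and recalling $\log\gamma_\varepsilon<0$, this holds for all $k\ge\log(\min\{\epsilon,\deltalow/4\}/\varepsilon_0)/\log\gamma_\varepsilon$, which gives the stated ceiling formula. The hypotheses of \cref{thm:complexity_outer} are therefore met. One caveat: strictly, $\{\varepsilon_k\}$ is not a null sequence here. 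This is harmless, since the proof of \cref{thm:complexity_outer} only uses $\varepsilon_k\le\min\{\epsilon,\deltalow/4\}$ for $k\ge\Ninnertol$. I would add a remark to that effect.

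\textbf{Step 2: per-iteration bound.} By \cref{thm:complexity_outer}, some outer iteration $k\le\Niterout$ has the desired properties. Hence it suffices to bound $\sum_{k=0}^{\Niterout}\Niterin(\upsilon_k,\varepsilon_k)$. For each $k$, I will use two facts:
\begin{itemize}
\item[(a)] $\upsilon_k\ge\gamma^k\upsilon_0$, since $\upsilon$ is multiplied by $\gamma$ at most once per iteration;
\item[(b)] $\varepsilon_k\ge\gamma_\varepsilon^k\varepsilon_0$, directly from the definition of $\varepsilon_k$ as a maximum.
\end{itemize}
Together these give $\Niterin(\upsilon_k,\varepsilon_k)=\citerin\upsilon_k^{-2}\varepsilon_k^{-2}\le\frac{\citerin}{(\upsilon_0\varepsilon_0)^2}\theta^k$, where $\theta=(\gamma\gamma_\varepsilon)^{-2}>1$.

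\textbf{Step 3: summation.} Summing the geometric series over $k=0,\ldots,\Niterout$ gives $\frac{\citerin}{(\upsilon_0\varepsilon_0)^2}\frac{\theta^{\Niterout+1}-1}{\theta-1}$. Taking the ceiling, as the iteration count is an integer, yields $\Nitertot$.

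\textbf{Main obstacle.} The only delicate point is the indexing. The statement of \cref{thm:complexity_outer} says the good iteration appears "after at most $\Niterout$ iterations". I will interpret this as: some index $k\in\{0,\ldots,\Niterout\}$ works. This interpretation accounts for the $\Niterout+1$ terms in the sum and matches the exponent $\Niterout+1$. Using the crude bounds (a) and (b) for every $k$ avoids any case distinction on whether a penalty update actually occurred.
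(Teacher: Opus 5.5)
Your proposal is correct and matches the paper's proof: the paper likewise bounds $\sum_{k=0}^{\Niterout}\citerin(\upsilon_k\varepsilon_k)^{-2}$ using $\upsilon_k\ge\gamma^k\upsilon_0$ and $\varepsilon_k\ge\gamma_\varepsilon^k\varepsilon_0$, then sums the resulting geometric series. You also correctly point out something the paper glosses over: with this choice, $\{\varepsilon_k\}$ is not a null sequence, but the proof of \cref{thm:complexity_outer} only uses $\varepsilon_k\le\min\{\epsilon,\deltalow/4\}$ for $k\ge\Ninnertol(\epsilon,\deltalow)$.
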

\begin{proof}
	The first assertion on $\Ninnertol(\epsilon, \deltalow)$ is obvious from the precise choice of $\{\varepsilon_k\}$ considered here.
	The second assertion immediately follows from 
	\begin{equation*}
		\sum_{k=0}^{\Niterout} \Niterin(\upsilon_k,\varepsilon_k)
		=
		\sum_{k=0}^{\Niterout} \frac{\citerin}{(\upsilon_k\varepsilon_k)^2}
		\leq
		\sum_{k=0}^{\Niterout} \frac{\citerin}{(\upsilon_0 \gamma^k\varepsilon_0 \gamma_\varepsilon^k)^2}
		=
		\frac{\citerin}{(\upsilon_0\varepsilon_0)^2} \sum_{k=0}^{\Niterout} \theta^k
		=
		\frac{\citerin}{(\upsilon_0 \varepsilon_0)^2} \frac{\theta^{\Niterout+1}-1}{\theta-1}
	\end{equation*}
	where the first equality is due to \eqref{eq:complexity_inner} and the inequality follows from $\upsilon_k \geq \gamma^k \upsilon_0$ and $\varepsilon_k \geq \gamma_\varepsilon^k \varepsilon_0$
	which are valid for each $k\in\N$.
\end{proof}
In \cref{thm:total_count_iterations}, note that the function evaluations $g(x^k)$ and $h(x^k)$ at \cref{step:ALM_elastic_safeguard:z,step:ALM_elastic_safeguard:y} of \cref{alg:ALM_elastic_safeguard} are included in the total count 
since they are performed already by the inner solver to certificate that $x^k$ is an $\varepsilon_k$-approximate solution of \eqref{eq:subproblem}
in the sense of \eqref{eq:subproblem_stationarity}.

\section{Numerical experiments}\label{sec:num_experiments}

In this section, we analyze the practical performance of \cref{alg:ALM_elastic_safeguard}
based on the benchmark collection CUTEst \cite{Gould2014}.
First, we comment on some implementation details in \cref{sec:setup}.
Second, in \cref{sec:CUTEst}, the precise setup is presented,
and associated results are documented and analyzed.

\subsection{Implementation details}\label{sec:setup}

We implemented \cref{alg:ALM_elastic_safeguard} as a variant of ALGENCAN \cite{andreani2008augmented,birgin2014practical,birgin2020complexity},
which is a special instance of \cref{alg:ALM_rigid_safeguard}
with the safeguards from \eqref{eq:natural_choice_safeYY}. 
The only difference between the two methods lies in the way they handle the safeguards $\safeYY^g$ and $\safeYY^h$ defined in \eqref{eq:natural_choice_safeYY}.
Since $\mu_{\max}$, $\lambda_{\min}$, and $\lambda_{\max}$ are fixed vectors in ALGENCAN, we refer to it as Rigid ALGENCAN hereafter, or \ralgencan{} for short.
In practice, one typically sets $\lambda_{\min}=-\epsilon_{\mathrm{mach}}^{-1}\mathtt e$
and $\mu_{\max}=\lambda_{\max}=\epsilon_{\mathrm{mach}}^{-1}\mathtt e$, 
where $\epsilon_{\mathrm{mach}} \approx 10^{-16}$ denotes machine precision and $\mathtt e$ is the all-ones vector of suitable dimension. 
In contrast, the safeguards are dynamic and their update depends on the parameter $\eta$ in \cref{alg:ALM_elastic_safeguard}, which we refer to as Elastic ALGENCAN, or \ealgencan{} for short.
By default, ALGENCAN uses $\gamma=0.1$ when updating the penalty parameter. 
The relation $\eta \in (\gamma,\sqrt{\gamma})$ suggests that 0.2 and 0.3 are reasonable choices for $\eta$ in \cref{alg:ALM_elastic_safeguard}.
In the experiments, we also tested different choices for the parameter vectors $\mu_{\max}$, $\lambda_{\min}$, and $\lambda_{\max}$. 
Specifically, we always considered $\lambda_{\min}=-\zeta\mathtt e$ and $\mu_{\max}=\lambda_{\max}=\zeta\mathtt e$, and tested $\zeta \in \{1, 10^3, 10^6, 10^{12}\}$. 

ALGENCAN is an ALM that incorporates several additional mechanisms to enhance its efficiency and robustness.
Two such features are the feasibility restoration (attempting to find a feasible point by minimizing the constraint violation) and the use of acceleration strategies (attempting to solve the nonlinear KKT system using Newton's method).
In the experiments reported in this section, our objective was to analyze a specific aspect of the ALM itself. 
For this reason, both of the aforementioned strategies, which are enabled by default in ALGENCAN, were disabled.

All tests reported below were conducted on a computer with a 5.1 GHz Intel Core i9-12900K processor and 128GB 3200 MHz DDR4 RAM memory, running Ubuntu 22.04.3 LTS. 
Codes were written in Fortran and compiled by the GNU Fortran compiler of GCC (version 11.4.0) with the -O3 optimization directive enabled.

\subsection{Results on CUTEst benchmark collection}\label{sec:CUTEst}

In our experiments, we considered all problems from the CUTEst collection~\cite{Gould2014} featuring inequality or equality constraints, or both.
This resulted in a total of 774 test problems. 
For each problem, we used the default dimensions and the provided starting point.
The collected instances have $n \in \{1,\ldots, 250997\}$ variables, $m \in \{0,\ldots, 570781\}$ inequality constraints, and $\ell \in \{0,\ldots, 250498\}$ equality constraints.
The distribution of problem sizes is shown in \cref{fig:cutest_distribution}.

\begin{figure}[tbh]
	\centering%
	\includetikz{cutest_distribution}%
	\caption{Distribution of test instances' size: number of variables $n$ against number of inequality constraints $m$ (left), number of equality constraints $\ell$ (middle), and total number of constraints $m+\ell$ (right).}%
	\label{fig:cutest_distribution}%
\end{figure}

Our comparisons focused primarily on robustness, namely the ability of the competing variants to identify feasible points with the lowest possible objective function values. 
As a secondary criterion, we assessed their ability to compute points satisfying the KKT conditions. 
For those problems in which the methods under comparison produced feasible points with similar objective function values, we compared their efficiency.
Since the methods being compared were merely variants of the same algorithm, we adopted the total number of objective function evaluations plus constraint evaluations as the performance metric. 
Because all variants performed exactly the same linear algebra computations, this metric provided a fair and reliable measure of computational effort, avoiding the measurement inaccuracies often associated with CPU-time comparisons.
To evaluate and compare efficiency of various methods, we use (relative) performance profiles \cite{Dolan2002}.
Exercising the caution warranted by Gould and Scott \cite{gould2016note},
we adopt also absolute performance profiles.
For a given method $i \in \{ 1,\dots,n_{\textsc{meth}}\}$,
the \emph{absolute} and \emph{relative performance profiles} are given by
\[
\Gamma_i^{\text{a}}(\tau) 
\coloneqq 
\frac{|\left\{ j \in \{1,\dots,n_{\textsc{prob}}\} \; | \; t_{ij} \leq \tau \right\}|}{n_{\textsc{prob}}}
\quad\text{and}\quad
\Gamma_i(\kappa) 
\coloneqq 
\frac{|\left\{ j \in \{1,\dots,n_{\textsc{prob}}\} \; | \; t_{ij}
	\leq \kappa t_j^\star \right\}|}{n_{\textsc{prob}}},
\]
respectively, where
$n_{\textsc{prob}}$ is the number of considered problems,
$|{\cal S}|$ denotes the cardinality of the set~${\cal S}$,
$t_{ij}$ is the performance metric for method~$i$ when applied to problem~$j$, 
and $t_j^\star \coloneqq \min_{s \in \{1,\dots,n_{\textsc{meth}}\}} \{ t_{sj} \}$ is the best performance among the $n_{\textsc{meth}}$ methods on problem $j$.

ALGENCAN declares success when it finds a primal iterate together with associated Lagrange multipliers 
that satisfy feasibility, complementarity, and optimality criteria within tolerances $\epsilon_{\mathrm{feas}}$, $\epsilon_{\mathrm{compl}}$, and $\epsilon_{\mathrm{opt}}$, respectively. 
The precise stopping criterion is described in \cite{birgin2020complexity}. 
In practice, we set all three tolerances equal to $10^{-8}$ and a CPU time limit of 10 minutes on each run. 
From this point onward we refer to a stationary point as any point that satisfies this stopping criterion, and to a feasible point as any point that satisfies at least the feasibility criterion. 
Suppose that, for a given problem, by running different methods, we obtain $n_{\textsc{feas}}$ feasible points $x_1, \dots, x_{n_{\textsc{feas}}}$ with objective function values $f(x_1), \dots, f(x_{n_{\textsc{feas}}})$. 
We say that $x_p$, $p\in\{1,\dots,n_{\textsc{feas}}\}$, is a solution if
\[
	f(x_p) \leq f_{\mathrm{best}} + f_{\mathrm{tol}} \max\{1, |f_{\mathrm{best}}|\},
\]
where $f_{\mathrm{best}} \coloneqq \min\{ f(x_1), \dots, f(x_{n_{\textsc{feas}}}) \}$ and $f_{\mathrm{tol}} > 0$ is a prescribed tolerance. 
When comparing different methods on a given problem, 
any method that fails to find a feasible point is considered unsuccessful whenever at least one competing method succeeds in finding one. 
Among the methods that do find feasible points, those that fail to find a solution, as defined above, are also considered unsuccessful. 
Efficiency is compared only among the methods that successfully find a solution.

\medskip

Considering Rigid ALGENCAN (\ralgencan{}) and the eight combinations of $\eta$ and $\zeta$ for Elastic ALGENCAN (\ealgencan{}), we applied nine variants of ALGENCAN to the 774 CUTEst problems. 
Our first observation is that the nine variants behaved identically on 424 of the 774 problems. 
Therefore, the comparison that follows focuses on the remaining 350 problems. 
On these problems, with the exception of the two variants using $\zeta=1$, the other seven variants found between 164 and 167 points satisfying the KKT conditions. 
The variants with $\zeta=1$ found 93 stationary points when $\eta=0.2$ and 131 stationary points when $\eta=0.3$. 
With respect to finding feasible points, all nine variants found between 296 and 302 feasible points. 
Up to this stage, no substantial differences can be observed among the methods. 
The comparison in terms of finding a solution depends on the value chosen for $f_{\mathrm{tol}}$. 
\cref{tab1} reports the results for values of $f_{\mathrm{tol}}$ ranging from $10^{-1}$ to $10^{-8}$. 
For relatively loose tolerances, no significant differences can be observed among the methods. 
However, for $f_{\mathrm{tol}}=10^{-8}$, for example, the difference between the variants using $\zeta \in \{1,10^3\}$ and those using $\zeta \in \{10^6,10^{12}\}$ is approximately ten problems. 
In other words, the variants with smaller values of $\zeta$ find roughly ten fewer solutions than the other variants.
\ralgencan{} is consistently among the variants that find the largest number of solutions, regardless of the value of $f_{\mathrm{tol}}$. 
Overall, considering the criteria of finding stationary points, finding feasible points, and finding solutions, the \ealgencan{} variants with $\eta=0.3$ exhibit a marginal advantage over those using $\eta=0.2$. 
Among the \ealgencan{} variants with $\eta=0.3$, those using $\zeta \in \{10^6,10^{12}\}$ appear to exhibit a minor advantage over \ralgencan{}.

\begin{table}[ht!]
\renewcommand*{\arraystretch}{1.1}
\begin{center}
\begin{tabular}{ccc|cccccccc}
\hline
& \multirow{2}{*}{$\eta$} & \multirow{2}{*}{$\zeta$} & \multicolumn{8}{c}{$f_{\mathrm{tol}}$} \\
\cline{4-11}
&  &  & $10^{-1}$ & $10^{-2}$ & $10^{-3}$ & $10^{-4}$ & $10^{-5}$ & $10^{-6}$ & $10^{-7}$ & $10^{-8}$ \\
\hline
\hline
\multirow{8}{*}{\rotatebox{90}{Elastic ALGENCAN}}
& \multirow{4}{*}{$0.2$} & $1$       & 289 		& 287 		& \bf{281} 	& 274 		& 269 		& \bf{265} 	& \bf{255} 	& 233\\
&                        & $10^3$    & 288 		& 284 		& 280 		& 275 		& 270 		& 262 		& 246 		& 234\\
&                        & $10^6$    & 291 		& 286 		& 279 		& 277 		& 272 		& 263 		& 250 		& 240\\
&                        & $10^{12}$ & 290 		& 286 		& 279 		& 277 		& 273 		& 264 		& 251 		& 241\\
\cline{2-11}
& \multirow{4}{*}{$0.3$} & $1$       & 291 		& \bf{290} 	& \bf{281} 	& 273 		& 267 		& 261 		& 252 		& 234\\
&                        & $10^3$    & 290 		& 286 		& 280 		& 275 		& 270 		& 261 		& 248 		& 235\\
&                        & $10^6$    & \bf{292} & 288 		& \bf{281} 	& \bf{279} 	& \bf{274} 	& \bf{265} 	& 252 		& \bf{242} \\
&                        & $10^{12}$ & 291 		& 287 		& 280 		& 278 		& \bf{274} 	& \bf{265} 	& 252 		& \bf{242} \\
\hline
\hline
\multicolumn{3}{l|}{Rigid ALGENCAN}    & 290 	& 286 		& 279 		& 277 		& 273 		& 264 		& 251 		& 241\\
\hline
\end{tabular}
\end{center}
\caption{Number of solutions found by Rigid ALGENCAN (\ralgencan{}) and Elastic ALGENCAN (\ealgencan{}) for different values of $\eta$ and $\zeta$, for varying values of $f_{\mathrm{tol}}$. For each column, the best performance is highlighted in \textbf{boldface}.}
\label{tab1}
\end{table}

If, motivated by their slightly superior robustness, we restrict the comparison to \ralgencan{} and the \ealgencan{} variants with $\eta=0.3$ and $\zeta \in \{10^6,10^{12}\}$, 
the three methods behave identically on 745 of the 774 problems, leaving only 29 problems on which their performance differs. 
On these problems, they find essentially the same number of stationary points (either none or one), the same number of feasible points (between 11 and 13), 
and the same number of solutions (between 8 and 10). 
On 7 problems, all three methods find equivalent solutions. 
For each of these 7 problems, \ealgencan{} with $\eta=0.3$ and $\zeta=10^6$ is the most efficient variant.
On the other hand, for any value of $f_{\mathrm{tol}}$ strictly greater than $10^{-8}$, \cref{tab1} shows that there are no significant differences in the number of solutions found by the nine variants. 
Moreover, among the 350 problems on which the variants do not behave identically, there are 286 for which all nine variants find a solution. 
\cref{fig1} compares the efficiency of the nine variants on these 286 problems using profiles 
that consider, as performance measure, the number of objective function evaluations plus constraint evaluations performed by each method.
The figure shows that all variants exhibit very similar efficiency, with a slight disadvantage for \ealgencan{} with $\eta=0.2$ and $\zeta=1$ and, 
perhaps surprisingly, a slight advantage for \ealgencan{} with $\eta=0.3$ and $\zeta=1$.

\begin{figure}[tbh]
	\centering%
	\includetikz{dataprof_1}
	\includetikz{perfprof_1}
	\caption{Comparison of the eight variants of Elastic ALGENCAN (denoted \ealgencan{}$(\eta,\zeta)$) and the default version of Rigid ALGENCAN (denoted \ralgencan{}$(\zeta)$), considering the 286 problems in which all nine methods found similar solutions. Absolute (top) and relative (bottom) performance profiles.}
	\label{fig1}
\end{figure}

At this point, it is clear that the performance of \ealgencan{} is very similar to that of \ralgencan{} and that \ealgencan{} is largely insensitive to the choice of $\eta$ and $\zeta$. 
A closer assessment of \ealgencan{} with $\eta=0.3$ and $\zeta=1$ relative to \ralgencan{} is depicted in \cref{fig3}, considering the 173 problems for which the two methods found comparable solutions.
These profiles indicate that there is a small yet perceptible benefit with \ealgencan{} over \ralgencan{}.

\begin{figure}[tbh]
	\centering%
	\includetikz{dataprof_3}%
	\includetikz{perfprof_3}
	\caption{Comparison of Elastic ALGENCAN with $\eta=0.3$ and $\zeta=1$ (denoted \ealgencan{}$(\eta,\zeta)$) and the default Rigid ALGENCAN (denoted \ralgencan{}$(\zeta)$), considering the 173 problems in which the two methods found similar solutions. Absolute (left) and relative (right) performance profiles.}
	\label{fig3}
\end{figure}

By contrast, we now consider \ralgencan{} variants with $\zeta \in \{0, 1, 10^3, 10^6, 10^{12}, \epsilon_{\mathrm{mach}}^{-1}\}$. 
The first corresponds to the pure quadratic penalty method, while the last corresponds to the default version of \ralgencan{}. 
These six variants behave identically on only 50 of the 774 problems. 
Considering the remaining 724 problems, the variants with $\zeta=0$, $\zeta=1$, and $\zeta=10^3$ find 136, 309, and 457 stationary points, respectively, whereas each of the remaining variants finds 483 stationary points. 
The number of feasible points found across the six variants ranges from 662 to 670. 
Similarly, when $f_{\mathrm{tol}}=0.1$, the number of solutions found ranges from 648 to 654. 
For the 645 problems on which all six variants find a solution, their efficiency can be meaningfully compared.
\cref{fig2} presents the corresponding performance profiles, revealing that \ralgencan{} is more sensitive to the choice of safeguard bounds than \ealgencan{}.

\begin{figure}[tbh]
	\centering
	\includetikz{dataprof_2}
	\includetikz{perfprof_2}
	\caption{Comparison of six variants of Rigid ALGENCAN (denoted \ralgencan{}$(\zeta)$), including pure quadratic penalty ($\zeta=0$), considering the 645 problems in which all six methods found similar solutions. Absolute (top) and relative (bottom) performance profiles.}
	\label{fig2}
\end{figure}

\section{Conclusions}\label{sec:conclusions}

In this work, we investigated elastic safeguarding in augmented Lagrangian methods (ALM) and studied it through the lens of ALGENCAN, a state-of-the-art solver against which its elastically safeguarded variant was compared.
From a practical standpoint the main finding is that the elastic ALM achieves robustness and efficiency nearly indistinguishable from those of its rigidly safeguarded counterpart, while being largely insensitive to the choice of its parameters.
The latter property is not shared by the rigid variant, whose performance deteriorates when small safeguarding bounds are used.
The elastic ALM is therefore the preferable alternative.
As illustrated in this paper, from a theoretical standpoint both methods share essentially the same global convergence and complexity guarantees
when considering standard nonlinear optimization problems, confirming that the added flexibility of elastic safeguarding incurs no theoretical cost.
In \cite[\S~6]{demarchi2025augmented}, it has been illustrated that, when restricting to fully convex optimization problems,
the elastic ALM possesses some advantages over its rigid analogue.
Thus, the elastic mechanism is recommended when implementing safeguarded ALMs.

% references
\phantomsection
\addcontentsline{toc}{section}{References}%
\bibliographystyle{habbrv}
\bibliography{bdm2026}

\begin{thebibliography}{10}
\expandafter\ifx\csname url\endcsname\relax
  \def\url#1{\texttt{#1}}\fi
\expandafter\ifx\csname doi\endcsname\relax
  \def\doi#1{\burlalt{doi:#1}{http://dx.doi.org/#1}}\fi
\expandafter\ifx\csname urlprefix\endcsname\relax\def\urlprefix{URL }\fi
\expandafter\ifx\csname href\endcsname\relax
  \def\href#1#2{#2}\fi
\expandafter\ifx\csname burlalt\endcsname\relax
  \def\burlalt#1#2{\href{#2}{#1}}\fi

\bibitem{andreani2008augmented}
R.~Andreani, E.~G. Birgin, J.~M. Mart{\'i}nez, and M.~L. Schuverdt.
\newblock On augmented {L}agrangian methods with general lower--level
  constraints.
\newblock {\em SIAM Journal on Optimization}, 18(4):1286--1309, 2008.
\newblock \doi{10.1137/060654797}.

\bibitem{AndreaniMartinezRamosSilva2016}
R.~Andreani, J.~M. Mart{\'{i}}nez, A.~Ramos, and P.~J.~S. Silva.
\newblock A cone-continuity constraint qualification and algorithmic
  consequences.
\newblock {\em SIAM Journal on Optimization}, 26(1):96--110, 2016.
\newblock \doi{10.1137/15M1008488}.

\bibitem{bertsekas1996constrained}
D.~P. Bertsekas.
\newblock {\em Constrained Optimization and {L}agrange Multiplier Methods}.
\newblock Athena Scientific, 1996.

\bibitem{BirginCastilloMartinez2005}
E.~G. Birgin, R.~A. Castillo, and J.~M. Mart\'inez.
\newblock Numerical comparison of augmented {L}agrangian algorithms for
  nonconvex problems.
\newblock {\em Computational Optimization and Applications}, 31:31--55, 2005.
\newblock \doi{10.1007/s10589-005-1066-7}.

\bibitem{BirginHaeserMaculanRamirez2025}
E.~G. Birgin, G.~Haeser, N.~Maculan, and L.~M. Ramirez.
\newblock On the global convergence of a general class of augmented
  {L}agrangian methods.
\newblock {\em Journal of Optimization Theory and Applications}, 206:57, 2025.
\newblock \doi{10.1007/s10957-025-02734-0}.

\bibitem{birgin2014practical}
E.~G. Birgin and J.~M. Mart\'inez.
\newblock {\em Practical Augmented {L}agrangian Methods for Constrained
  Optimization}.
\newblock SIAM, Philadelphia, 2014.
\newblock \doi{10.1137/1.9781611973365}.

\bibitem{birgin2020complexity}
E.~G. Birgin and J.~M. Mart\'inez.
\newblock Complexity and performance of an augmented {Lagrangian} algorithm.
\newblock {\em Optimization Methods and Software}, 35(5):885--920, 2020.
\newblock \doi{10.1080/10556788.2020.1746962}.

\bibitem{conn1991globally}
A.~R. Conn, N.~I.~M. Gould, and P.~L. Toint.
\newblock A globally convergent augmented {L}agrangian algorithm for
  optimization with general constraints and simple bounds.
\newblock {\em SIAM Journal on Numerical Analysis}, 28(2):545--572, 1991.
\newblock \doi{10.1137/0728030}.

\bibitem{demarchi2025augmented}
A.~De~Marchi, T.~Hoheisel, and P.~Mehlitz.
\newblock Augmented {L}agrangian methods for fully convex composite
  optimization, 2025, \burlalt{2511.07117}{http://arxiv.org/abs/2511.07117}.

\bibitem{demarchi2023constrained}
A.~De~Marchi, X.~Jia, C.~Kanzow, and P.~Mehlitz.
\newblock Constrained composite optimization and augmented {L}agrangian
  methods.
\newblock {\em Mathematical Programming}, 201(1):863--896, 2023.
\newblock \doi{10.1007/s10107-022-01922-4}.

\bibitem{demarchi2024local}
A.~De~Marchi and P.~Mehlitz.
\newblock Local properties and augmented {L}agrangians in fully nonconvex
  composite optimization.
\newblock {\em Journal of Nonsmooth Analysis and Optimization}, 5, 2024.
\newblock \doi{10.46298/jnsao-2024-12235}.

\bibitem{Dolan2002}
E.~D. Dolan and J.~J. Mor\'e.
\newblock Benchmarking optimization software with performance profiles.
\newblock {\em Mathematical Programming}, 91(2):201--213, 2002.
\newblock \doi{10.1007/s101070100263}.

\bibitem{Gould2014}
N.~I.~M. Gould, D.~P. Orban, and P.~L. Toint.
\newblock {CUTEst}: a constrained and unconstrained testing environment with
  safe threads for mathematical optimization.
\newblock {\em Computational Optimization and Applications}, 60(3):545--557,
  2014.
\newblock \doi{10.1007/s10589-014-9687-3}.

\bibitem{gould2016note}
N.~I.~M. Gould and J.~Scott.
\newblock A note on performance profiles for benchmarking software.
\newblock {\em ACM Transactions on Mathematical Software}, 43(2), 2016.
\newblock \doi{10.1145/2950048}.

\bibitem{hestenes1969multiplier}
M.~R. Hestenes.
\newblock Multiplier and gradient methods.
\newblock {\em Journal of Optimization Theory and Applications}, 4(5):303--320,
  11 1969.
\newblock \doi{10.1007/BF00927673}.

\bibitem{KanzowKraemerMehlitzWachsmuthWerner2025}
C.~Kanzow, F.~Kr{\"a}mer, P.~Mehlitz, G.~Wachsmuth, and F.~Werner.
\newblock Variational {P}oisson denoising via augmented {L}agrangian methods.
\newblock {\em Electronic Transactions on Numerical Analysis}, 63:33--62, 2025.
\newblock \doi{10.1553/etna{\_}vol63s33}.

\bibitem{kanzow2017example}
C.~Kanzow and D.~Steck.
\newblock An example comparing the standard and safeguarded augmented
  {L}agrangian methods.
\newblock {\em Operations Research Letters}, 45(6):598--603, 2017.
\newblock \doi{10.1016/j.orl.2017.09.005}.

\bibitem{powell1969method}
M.~J.~D. Powell.
\newblock {\em A method for nonlinear constraints in minimization problems},
  pages 283--298.
\newblock Academic Press, 1969.

\bibitem{rockafellar1973multiplier}
R.~T. Rockafellar.
\newblock The multiplier method of {H}estenes and {P}owell applied to convex
  programming.
\newblock {\em Journal of Optimization Theory and Applications},
  12(6):555--562, 1973.
\newblock \doi{10.1007/BF00934777}.

\bibitem{rockafellar1976augmented}
R.~T. Rockafellar.
\newblock Augmented {L}agrangians and applications of the proximal point
  algorithm in convex programming.
\newblock {\em Mathematics of operations research}, 1(2):97--116, 1976.
\newblock \doi{10.1287/moor.1.2.97}.

\bibitem{rockafellar1976monotone}
R.~T. Rockafellar.
\newblock Monotone operators and the proximal point algorithm.
\newblock {\em SIAM Journal on Control and Optimization}, 14(5):877--898, 1976.
\newblock \doi{10.1137/0314056}.

\bibitem{rockafellar2022convergence}
R.~T. Rockafellar.
\newblock Convergence of augmented {L}agrangian methods in extensions beyond
  nonlinear programming.
\newblock {\em Mathematical Programming}, 199(1):375--420, 2023.
\newblock \doi{10.1007/s10107-022-01832-5}.

\bibitem{rockafellar1998variational}
R.~T. Rockafellar and R.~J. Wets.
\newblock {\em Variational Analysis}, volume 317.
\newblock Springer, 1998.
\newblock \doi{10.1007/978-3-642-02431-3}.

\end{thebibliography}

\end{document}